\documentclass{amsart}
\usepackage{amsthm,amsfonts,amsmath,amscd,amssymb,latexsym,epsfig}


\newcommand{\ssl}{{\mathfrak s \mathfrak l}}

\newcommand{\g}{{\mathfrak g}}         

\newcommand{\cx}{{\mathbb C}}
\newcommand{\diag}{\operatorname{diag}}

\newcommand{\tr}{\operatorname{tr}}

\newcommand{\im}{\operatorname{Im}}

\newcommand{\End}{\operatorname{End}}
\newcommand{\sEnd}{{\mathcal End}}

\newcommand{\Ker}{\operatorname{Ker}}

\newcommand{\Hilb}{\operatorname{Hilb}}

\newcommand{\Ext}{\operatorname{Ext}}

\newcommand{\Mat}{\operatorname{Mat}}

\newcommand{\Spec}{\operatorname{Spec}}

\newcommand{\ol}{\overline}

\numberwithin{equation}{section}

\newtheorem{theorem}{Theorem}[section]

\newtheorem{lemma}[theorem]{Lemma}

\newtheorem{corollary}[theorem]{Corollary}

\newtheorem{proposition}[theorem]{Proposition}

\theoremstyle{remark}

\newtheorem{remark}[theorem]{Remark}

\newtheorem{definition}[theorem]{Definition}

\newtheorem{example}[theorem]{Example}

\newcommand{\oC}{{\mathbb{C}}}

\newcommand{\oN}{{\mathbb{N}}}

\newcommand{\oP}{{\mathbb{P}}}

\newcommand{\oR}{{\mathbb{R}}}

\newcommand{\oZ}{{\mathbb{Z}}}

\newcommand{\sA}{{\mathcal{A}}}   

\newcommand{\sD}{{\mathcal{D}}}

\newcommand{\sF}{{\mathcal{F}}}

\newcommand{\sI}{{\mathcal{I}}}

\newcommand{\sK}{{\mathcal{K}}}
\newcommand{\sL}{{\mathcal{L}}}   
\newcommand{\sM}{{\mathcal{M}}}   
\newcommand{\sN}{{\mathcal{N}}}
\newcommand{\sO}{{\mathcal{O}}}
\newcommand{\sP}{{\mathcal{P}}}

\newcommand{\sT}{{\mathcal{T}}}

\newcommand{\sZ}{{\mathcal{Z}}}

\newcommand{\fG}{{\mathfrak{g}}}
\newcommand{\fH}{{\mathfrak{h}}}

\newcommand{\fL}{{\mathfrak{l}}}

\begin{document}

\title{Hilbert schemes, commuting matrices, and hyperk\"ahler geometry}
\author{Roger Bielawski \and Carolin Peternell}
\address{Institut f\"ur Differentialgeometrie,
Leibniz Universit\"at Hannover,
Welfengarten 1, 30167 Hannover, Germany}
\subjclass[2010]{14C05, 14H50, 53C26, 53C28}



\begin{abstract} We represent algebraic curves via commuting matrix polynomials. This allows us to show that the Hilbert scheme of cohomologically stable nonplanar curves of genus $0$ and degree $d$ in $\oP^3\backslash\oP^1$ is isomorphic to a complexified hyperk\"ahler quotient of an open subset of a vector space by  a nonreductive Lie group. 
\end{abstract}

\maketitle
\thispagestyle{empty}


It has been observed in \cite{sigma} that the Hilbert scheme of real cohomologically stable (i.e.\ satisfying $h^0(\sN(-2))=0$) nonplanar  curves of fixed genus and degree in $\oP^3$, not intersecting a fixed real line, carries a natural pseudo-hyperk\"ahler structure. Here ``real" means invariant under a fixed point free antilinear involution of $\oP^3$. In the case of $g=0$ and $d=3$ this pseudo-hyperk\"ahler structure was shown in \cite{sigma} to be flat, and in fact, the manifold of cohomologically stable pure-dimensional Cohen-Macaulay nonplanar  curves in $\oP^3\backslash \oP^1$ with Hilbert polynomial $3n+1$ with its natural complexified hyperk\"ahler structure was shown there to be isomorphic to $\cx^{12}\simeq \cx^3\otimes \Mat_{2\times 2}(\cx)$. The proof of this relies on such curves being ACM and so clearly different methods are needed to study the pseudo-hyperk\"ahler geometry of the corresponding open subset of $\Hilb_{d,g}$ for other values of $d$ and $g$. 
\par
In the present article we present such a method via a correspondence between algebraic curves equipped with a flat projection onto $\oP^1$ and commuting matrix polynomials.
This correspondence allows us to describe the locus of cohomologically stable nonplanar curves of arithmetic genus $0$ and degree $d$ in $\oP^3\backslash \oP^1$ as a complexified hyperk\"ahler quotient of (an open subset of) a vector space by a {\em nonreductive} Lie group (Theorem \ref{main}). Formally, our moment map equations are very similar to the complex ADHM equations used by   Frenkel and Jardim \cite{FJ} in their construction of admissible torsion-free sheaves on $\oP^3$. The main difference is that the Lie group acting on solutions is no longer reductive.
\par
Restricting this description to real curves, we obtain the above pseudo-hyperk\"ahler structure for $g=0$ and any odd $d$ (for even $d$ there are no real rational curves in the above sense) as a hyperk\"ahler quotient of an open subset of a  flat quaternionic vector space by a nonreductive Lie group. There is also an analogous description of the natural hypersymplectic structure on cohomologically stable nonplanar curves of genus $0$ and any degree which are invariant under an antilinear involution of $\oP^3$, the fixed point set of which is $\oR\oP^3$. 
\par
We briefly describe the structure and the content of the paper. In the next section we provide alternative (to the one given by Nakajima \cite{Nak}) descriptions of the Hilbert scheme $(\cx^2)^{[n]}$ and of its open subset of non-collinear points. In \S 2 we discuss the above-mentioned correspondence between algebraic curves and commuting matrix polynomials. In \S 3 we restrict our attention to space curves, and show how to construct a twistor space $Z_d\to \oP^1$, the sections of which can be identified with either a) algebraic curves of fixed degree, or b) equivalence classes of certain commuting pairs of polynomials. Section 4 discusses complexified hyperk\"ahler and hypercomplex structures and their quotients. Finally, in \S 5 we apply these ideas and results to  genus $0$ space curves and show that in this case the description b) of sections of $Z_d$ is equivalent to a complexified hyperk\"ahler quotient. 

\begin{remark} Throughout the paper a {\em curve} means a projective Cohen-Macaulay scheme of pure dimension $1$.\label{curve}\end{remark}
                                                                                                                                                                                                                                                                \vspace{1mm}
                                                                                                                                                                                                                                                                
{\em Acknowledgement.}  This work has been carried out while both authors were members of, and the second author was fully funded by the DFG Priority Programme 2026 ``Geometry at infinity", the support of which is gratefully acknowledged. The authors also thank Michael Bulois and Israel Vainsencher for pointing out mistakes in an earlier version of the paper. Last, but not least, we are grateful to the anonymous referees for many helpful comments, which resulted in a greatly improved presentation.

\section{Hilbert schemes of points in $\cx^2$ and commuting matrices\label{comm}}

We begin with the following easy observation (cf. \cite{Nak} if $k=2$ and \cite[Thm. 2.5]{HJ} for general $k$):
\begin{proposition} There exists a natural set-theoretic bijection between the Hilbert scheme $(\cx^k)^{[n]}$ of $n$ points in $\cx^k$ and $GL(n,\cx)$-orbits of $k$-tuples $(A_1,\dots,A_k)$ of $n\times n$ matrices such that $\cx[A_1,\dots,A_k]$ is an $n$-dimensional commutative algebra which is conjugate to its image in $\End(\cx^n)$ under the regular representation.\label{bijection}
\end{proposition}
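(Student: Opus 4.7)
The plan is to construct explicit mutually-inverse set maps, the single substantive point being to interpret the hypothesis ``$\cx[A_1,\dots,A_k]$ is conjugate to its image in $\End(\cx^n)$ under the regular representation'' as the statement that $\cx^n$ is a free rank-one $\cx[A_1,\dots,A_k]$-module, equivalently, that the subalgebra admits a cyclic vector in $\cx^n$.

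For the forward map I start with an ideal $I\subset\cx[x_1,\dots,x_k]$ of colength $n$ and set $A:=\cx[x_1,\dots,x_k]/I$. Choosing a $\cx$-basis of $A$, let $A_i\in\Mat_{n\times n}(\cx)$ be the matrix of multiplication by the class of $x_i$. These commute, and the induced embedding $A\hookrightarrow\End(\cx^n)\cong\End_\cx(A)$ is tautologically the regular representation of $A$; in particular $\cx[A_1,\dots,A_k]\cong A$ is $n$-dimensional and satisfies the hypothesis. A different choice of basis conjugates the tuple by $GL(n,\cx)$, so the $GL(n,\cx)$-orbit is well defined.

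Going backward, assume $B:=\cx[A_1,\dots,A_k]$ has dimension $n$ and is conjugate to its image under its own regular representation on itself. An element $g\in GL(n,\cx)$ realising this conjugacy transports $1_B\in B$ to a cyclic vector $v\in\cx^n$, and the evaluation $b\mapsto bv$ becomes a $B$-module isomorphism $B\xrightarrow{\sim}\cx^n$; conversely any cyclic vector produces such a $g$ by transport of structure. I then define $I:=\ker\bigl(\cx[x_1,\dots,x_k]\to B,\ x_i\mapsto A_i\bigr)$, which has $\cx$-codimension $\dim B=n$ and hence gives a point of $(\cx^k)^{[n]}$; conjugating the $A_i$ by $GL(n,\cx)$ leaves $I$ unchanged.

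The round trips are then routine: composing backward-then-forward recovers the same $I$ because both constructions identify it with the kernel of $\cx[x_1,\dots,x_k]\to A$, while composing forward-then-backward returns the regular representation of $B$ on itself, which by the hypothesis is $GL(n,\cx)$-conjugate to the original tuple. The main obstacle is thus not computational but definitional: one must unpack the ``regular representation'' hypothesis into the cyclicity condition above, after which the inverse correspondence becomes transparent.
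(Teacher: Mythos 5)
Your argument is correct and follows essentially the same route as the paper: the same forward map (multiplication operators on $\cx[x_1,\dots,x_k]/I$), the same inverse (the kernel of the evaluation homomorphism), and the same use of the regular-representation hypothesis to close the round trip. Your reformulation of that hypothesis as the existence of a cyclic vector is exactly what the paper records in the remark immediately following the proposition.
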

\begin{proof} A point in the Hilbert scheme $(\mathbb{C}^k)^{[n]}$ is defined by an ideal $I \subset \mathbb{C}[z_1,\dots,z_k]$ of length $n$, i.e. such that $\dim \mathbb{C}[z_1,\dots,z_k] / I = n$. Multiplication by $z_i$, $i=1,\dots,k$, defines an endomorphism $A_i$ of $\mathbb{C}[z_1,\dots,z_k] / I $. Clearly the $A_i$ commute and 
$\dim \cx[A_1,\dots,A_k]= \dim \mathbb{C}[z_1,\dots,z_k] / I = n$. Moreover, directly from the construction, $ \cx[A_1,\dots,A_k]$ is conjugate to its image under the regular representation. Conversely, given $k$ commuting matrices $A_1,\dots,A_k$ with $\dim \cx[A_1,\dots,A_k]=n$ we define a homomorphism $\cx[z_1,\dots,z_k]\to \cx^n\simeq \cx[A_1,\dots,A_k]$ via $p(z_1,\dots, z_k)\mapsto p(A_1,\dots,A_k)$. Clearly it is surjective and its kernel is an ideal of length $n$. Moreover this ideal does not change under simultaneous conjugation of $A_1,\dots,A_k$, and so we obtain a well-defined point of $(\mathbb{C}^k)^{[n]}$. Since we assume that $\cx[A_1,\dots,A_k]$ is conjugate to its image under the regular representation, the two maps are inverse to each other. 
\end{proof}

\begin{remark} The condition that the algebra $\sA=\cx[A_1,\dots,A_k]$ is conjugate to its image under the regular representation is equivalent to the existence of a cyclic vector for $\sA$. This is the same argument as in \cite[p.8]{Nak}. Also, the set of $k$-tuples $(A_1,\dots,A_k)$ of commuting $n\times n$ matrices with $\dim \cx[A_1,\dots,A_k]=n$ and having a cyclic vector is open in the set of all commuting $k$-tuples. Indeed, due to the Cayley-Hamilton theorem, any element of $\cx[A_1,\dots,A_k]$ belongs to the linear span of $A_1^{i_1}\dots A_k^{i_k}$ for $i_1,\dots,i_k\leq n-1$. Thus the condition of not having cyclic vector is equivalent to 
$\dim\langle A_1^{i_1}\dots A_k^{i_k}v\,;\,i_1,\dots,i_k\leq n-1\rangle\leq n-1$ for any $v$. This is a closed condition.
\end{remark}

Henni and Jardim show that the quotient of the variety of $k$ commuting matrices together with a choice of a cyclic vector  by $GL(n,\cx)$  is a geometric quotient \cite[\S 2]{HJ}, and that the resulting scheme  is isomorphic to $(\cx^k)^{[n]}$ \cite[Cor. 4.9]{HJ}. For $k=2$, these results are due to Nakajima \cite[\S 1.2]{Nak}.
\par
We shall now  modify this description of $(\cx^2)^{[n]}$ in several ways. First of all, we shall want to eliminate the dependence on a cyclic vector. We recall the  following theorem of Neubauer and Saltman \cite{NS}:
\begin{theorem}[Neubauer-Saltman] Let $A$ and $B$ be two commuting $n\times n$ matrices. Then the following conditions are equivalent:
\begin{itemize}
\item[(1)] $\dim \cx[A,B]=n$.
\item[(2)] $\dim Z(A,B)=n$, where $Z(A,B)$ is the centraliser of the pair $(A,B)$.
\item[(3)] $(A,B)$ is a nonsingular point of the variety of commuting matrices.
\end{itemize}\end{theorem}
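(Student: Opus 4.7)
The plan is to show that all three conditions are equivalent to the existence of a cyclic vector $v\in V=\cx^n$ for $\sA=\cx[A,B]$, i.e.\ $\sA v=V$.

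For (2)$\Leftrightarrow$(3), I would compute the Zariski tangent space at $(A,B)$ of the commuting variety $\sC_n=\{(A,B)\in\Mat_n\times\Mat_n:[A,B]=0\}$ as the kernel of the linear map $\phi\colon\Mat_n\oplus\Mat_n\to\Mat_n$, $(X,Y)\mapsto[X,B]+[A,Y]$. Under the trace pairing on $\Mat_n$, the orthogonal complement of the image of $\phi$ is $Z(B)\cap Z(A)=Z(A,B)$, so $\dim T_{(A,B)}\sC_n = 2n^2-(n^2-\dim Z(A,B)) = n^2+\dim Z(A,B)$. Since $\sC_n$ is irreducible of dimension $n^2+n$ (Motzkin--Taussky--Gerstenhaber), one always has $\dim Z(A,B)\geq n$, with equality exactly at smooth points of $\sC_n$; that is (2)$\Leftrightarrow$(3).

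If $v\in V$ is cyclic, then the surjection $\sA\twoheadrightarrow V$, $M\mapsto Mv$, combined with Gerstenhaber's bound $\dim\sA\leq n$, forces $\dim\sA=n$, i.e.\ (1). Likewise the map $Z(A,B)\hookrightarrow V$, $C\mapsto Cv$, is injective (for $Cv=0$ forces $Cp(A,B)v=p(A,B)Cv=0$ on a spanning set), so $\dim Z(A,B)\leq n$, and combined with the preceding inequality gives (2). For the converses, regard $V$ as a faithful module over the commutative Artinian algebra $\sA$, decompose $\sA=\prod\sA_i$ into local factors and $V=\bigoplus V_i$ accordingly, so each $V_i$ is faithful over $\sA_i$. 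The two commutative-algebra lemmas I would invoke are: for a faithful module $W$ over a local commutative Artinian ring $R$, (i) $\dim W\geq\dim R$ with equality iff $W\simeq R$, and (ii) $\dim\End_R(W)\geq\dim W$ with equality iff $W$ is cyclic. Under (1), $\sum\dim\sA_i=n=\sum\dim V_i$ combined with (i) forces $V_i\simeq\sA_i$ termwise; under (2), $\sum\dim\End_{\sA_i}(V_i)=\dim Z(\sA)=n=\sum\dim V_i$ combined with (ii) forces each $V_i$ cyclic. In either case $V$ is cyclic over $\sA$, closing the loop.

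The main obstacle is the pair of commutative-algebra lemmas (i) and (ii). For (i) one picks $w\in W$ with $\operatorname{Ann}_R(w)$ minimal (using the descending chain condition on ideals of $R$); faithfulness and the local-ring structure force $\operatorname{Ann}_R(w)=0$, yielding $R\hookrightarrow W$. Lemma (ii) follows by decomposing $W$ into indecomposable cyclic summands $R/I_j$ and describing $\End_R(W)$ as a matrix of $\Hom_R(R/I_j,R/I_k)$'s, then bookkeeping their dimensions. The remaining ingredients---the tangent-space calculation, Gerstenhaber's bound, and the cyclic-vector manipulations---are routine.
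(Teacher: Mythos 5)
The paper gives no proof of this statement: it is quoted from Neubauer and Saltman \cite{NS} as a known result, so there is nothing internal to compare against, and your argument must stand on its own. Your treatment of (2)$\Leftrightarrow$(3) --- the tangent-space computation identifying $\ker\phi$ with $n^2+\dim Z(A,B)$ via the trace pairing, together with the Motzkin--Taussky irreducibility of the commuting variety --- is the standard route and is essentially sound. Likewise, the implications ``cyclic vector $\Rightarrow$ (1)'' and ``cyclic vector $\Rightarrow$ (2)'' are correct as you argue them.

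The central strategy, however, has a fatal gap: conditions (1)--(3) are \emph{not} equivalent to the existence of a cyclic vector, so the converses cannot be closed through that hub. Take $n=3$ and let $A$ be the matrix whose only nonzero entry is a $1$ in position $(1,2)$ and $B$ the one with a $1$ in position $(1,3)$; this realises the local algebra $R=\cx[x,y]/(x,y)^2$ acting on its dual module $R^\ast$. Then $[A,B]=0$, $\dim\cx[A,B]=3=n$, and a direct computation gives $\dim Z(A,B)=3=n$, so (1)--(3) all hold; yet $Av$ and $Bv$ lie in $\cx e_1$ for every $v$, so $\dim\cx[A,B]v\le 2$ and there is no cyclic vector. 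This is precisely the distinction the paper is at pains to preserve: $M_n^{\rm reg}$ (cyclic vector exists) is introduced as an open subset of $M_n^0$ ($\dim Z(A,B)=n$) exactly because cyclicity does not follow from (1) or (2), and Proposition \ref{bijection} carries it as a separate hypothesis. Concretely, the equality cases of both of your commutative-algebra lemmas fail at non-Gorenstein local algebras: for $R=\cx[x,y]/(x,y)^2$ the module $W=R^\ast$ is faithful with $\dim W=\dim R=3$ but $W\not\simeq R$, and $\dim\End_R(W)=3=\dim W$ while $W$ is not cyclic. Hence the steps ``(1) forces $V_i\simeq\sA_i$'' and ``(2) forces $V_i$ cyclic'' both collapse. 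The actual content of (1)$\Leftrightarrow$(2) --- that for a \emph{two-generated} commutative subalgebra the conditions $\dim\cx[A,B]=n$ and $\dim Z(A,B)=n$ coincide even though neither implies cyclicity --- requires the finer analysis of \cite{NS} and cannot be reduced to the module being free of rank one over each local factor.
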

The equivalence between (1) and (2) will be used repeatedly throughout the paper.

Let us write $M_n$ for the variety of pairs of commuting $n\times n$ matrices and define
$$ M_n^0=\bigl\{(A,B);\; A,B\in \Mat_{n\times n}(\cx),\enskip [A,B]=0,\enskip \dim Z(A,B)=n\bigr\},$$
and its open subset $M_n^{\rm reg}$ consisting of those $(A,B)\in M_n^0$ for which $\cx[A,B]$ is conjugate to its image in $\Mat_{n\times n}(\cx)$ under the regular representation.

\begin{proposition} $(\cx^2)^{[n]}$ is the geometric quotient of $M_n^{\rm reg}$ by $GL(n,\cx)$.\label{sch-iso}\end{proposition}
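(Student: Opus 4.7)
\medskip

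\noindent\emph{Proof plan.} Proposition \ref{bijection} already supplies the set-theoretic bijection between $(\cx^2)^{[n]}$ and the $GL(n,\cx)$-orbits on $M_n^{\rm reg}$, so the task is to upgrade it to a geometric quotient in the category of schemes. The plan is first to construct an explicit $GL(n,\cx)$-invariant morphism $\pi:M_n^{\rm reg}\to(\cx^2)^{[n]}$, and then to verify the universal property by descending along Nakajima's free quotient.

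To construct $\pi$, let $\sE$ denote the trivial rank-$n$ bundle $M_n^{\rm reg}\times\cx^n$ equipped with its two universal commuting endomorphisms $A$ and $B$. They generate a coherent subsheaf of algebras $\sA\subset\sEnd(\sE)$, namely the image of the map $\sO_{M_n^{\rm reg}}[x,y]\to\sEnd(\sE)$ sending $x,y$ to $A,B$. By the Neubauer--Saltman theorem the fibre of $\sA$ at every $(A,B)\in M_n^{\rm reg}$ has constant dimension $n$, so over the reduced base $M_n^{\rm reg}$ the sheaf $\sA$ is locally free of rank $n$, and $\Spec\sA\subset\cx^2\times M_n^{\rm reg}$ is a flat family of length-$n$ subschemes of $\cx^2$. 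The universal property of the Hilbert scheme produces the morphism $\pi$; its $GL(n,\cx)$-invariance is clear, and by Proposition \ref{bijection} its fibres are precisely the $GL(n,\cx)$-orbits.

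For the quotient property introduce the auxiliary space
\[\tilde M_n^{\rm reg}=\bigl\{(A,B,v)\in M_n^{\rm reg}\times\cx^n\,;\, v\text{ is cyclic for }\cx[A,B]\bigr\},\]
open in $M_n^{\rm reg}\times\cx^n$. The projection $q:\tilde M_n^{\rm reg}\to M_n^{\rm reg}$ is $GL(n,\cx)$-equivariant, smooth, and surjective (surjectivity being the defining property of $M_n^{\rm reg}$), and $GL(n,\cx)$ acts freely on $\tilde M_n^{\rm reg}$, since any stabiliser of $(A,B,v)$ lies in $Z(A,B)=\cx[A,B]$ and fixes the cyclic vector $v$. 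By Nakajima \cite{Nak} the geometric quotient $\tilde\pi:\tilde M_n^{\rm reg}\to \tilde M_n^{\rm reg}/GL(n,\cx)$ is a principal $GL(n,\cx)$-bundle onto $(\cx^2)^{[n]}$, and by construction $\tilde\pi=\pi\circ q$. The remaining descent along $q$---openness of $\pi$ via $\pi(V)=\tilde\pi(q^{-1}(V))$, and the identity $\sO_{(\cx^2)^{[n]}}=(\pi_*\sO_{M_n^{\rm reg}})^{GL(n,\cx)}$ by $q$-pullback from its $\tilde\pi$-analogue together with the injectivity of $q^*$ on sections---is the main but purely formal hurdle; conceptually one is merely using that the fibres of $q$ (open subsets of $\cx^n$) are connected, so forgetting the cyclic vector loses no information at the level of the quotient.
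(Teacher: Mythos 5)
Your proof is correct and follows essentially the same route as the paper: both reduce to Nakajima's realisation of $(\cx^2)^{[n]}$ as the free geometric quotient (indeed principal bundle) over the space of triples $(A,B,v)$ with $v$ cyclic, and then descend along the forgetful map $(A,B,v)\mapsto (A,B)$, whose fibres lie in single $GL(n,\cx)$-orbits, to transfer the structure-sheaf identity $\sO_{(\cx^2)^{[n]}}=(\sO)^{GL(n,\cx)}$ down to $M_n^{\rm reg}$. Your only addition is the explicit construction of the classifying morphism $\pi$ via the universal flat family $\Spec\sA$ (where one should say, slightly more carefully, that the evaluation map $\sO[x,y]\to\sEnd(\sE)$ has constant fibre rank $n$, so its image is locally free over the reduced base), whereas the paper leaves $\pi$ implicit and instead justifies the descent by identifying each fibre of the forgetful map with a single orbit of the $n$-dimensional stabiliser.
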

\begin{proof} It is the matter of checking the conditions that a geometric quotient has to satisfy \cite[Def. 0.6]{MFK}. The only one which perhaps requires an argument is that the structure sheaf of $(\cx^2)^{[n]}$ is equal to the $GL(n,\cx)$-invariant part of the structure sheaf of $M_n^{\rm reg}$. Nakajima \cite{Nak} shows that the Hilbert scheme of $n$ points in $\cx^2$ is isomorphic to the geometric quotient of the (smooth) variety
$$\tilde H_n=\{(A,B,v);\, [A,B]=0,\enskip \text{$v$ is a cyclic vector for $(A,B)$}\}$$
by the (free) action of $GL(n,\cx)$. Hence $\sO_{(\cx^2)^{[n]}}=(\sO_{\tilde H_n})^{GL(n,\cx)}$.
\par
We have the natural forgetful map $p:\tilde H_n\to M_n^{\rm reg}$, $(A,B,v)\mapsto (A,B)$, and the proof of Theorem 1.9 in \cite{Nak} shows that $p$ is a submersion. Observe now that the fibre $p^{-1}(A,B)$ is isomorphic to the stabiliser of $(A,B)$ in $GL(n,\cx)$. Indeed, the stabiliser is $n$-dimensional and it acts freely on the fibre. Since, owing to Proposition \ref{bijection}, the projection $\tilde H_n\to (\cx^2)^{[n]}$ factors (set-theoretically) through $M_n^{\rm reg}$, the action of the stabiliser on the fibre must be transitive. Therefore, for any open subset $U\subset M_n^{\rm reg}$, $\sO(U)^{GL(n,\cx)}= \sO(p^{-1}(U))^{GL(n,\cx)}$, and so $\sO_{M_n^{\rm reg}}=(\sO_{\tilde H_n})^{GL(n,\cx)}$.
\end{proof}
We shall now present another description of $(\cx^2)^{[n]}$: as a symplectic quotient of an open dense subset of pairs of matrices by a nonreductive group. The relevant subset $K_n$ consists of pairs $(A,B)$ of  $n\times n$ matrices, such that  the vector $e_1=(1,0,\dots,0)^T$ is cyclic for the pair $A,B$. The group $G$ is the subgroup of $GL(n,\cx)$ preserving the cyclic vector $e_1$, i.e. the first column of  elements of $G$ is equal to $e_1$. The symplectic form on $K_n$ is $\tr dA\wedge dB$, and the corresponding moment map $\mu:K_n\to \fG^\ast$ is 
the projection of $[A,B]$ onto the last $n-1$ rows.
\begin{theorem} The action of $G$ on $\mu^{-1}(0)\subset K_n$ is free and proper, and the symplectic quotient $\mu^{-1}(0)/G$ is biholomorphic to $(\cx^2)^{[n]}$.\label{G0}
\end{theorem}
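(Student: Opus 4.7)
The plan is to identify $\mu^{-1}(0)/G$ with Nakajima's model $(\cx^2)^{[n]}=\tilde H_n/GL(n,\cx)$ (recalled in the proof of Proposition \ref{sch-iso}), using $\{v=e_1\}$ as a slice for the $GL(n,\cx)$-action on $\tilde H_n$. Freeness of the $G$-action on $K_n$ is immediate: any $g\in G$ stabilising $(A,B)$ commutes with $A$ and $B$ and fixes $e_1$, hence fixes every $A^iB^je_1$, and cyclicity then forces $g=I$.

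The crux is to prove that $\mu^{-1}(0)=\{(A,B)\in K_n:[A,B]=0\}$. The condition $\mu=0$ unpacks to $[A,B]=e_1r^T$ for some row vector $r^T$, with $r_1=0$ automatic from $\tr[A,B]=0$; the goal is $r=0$. Since the cyclic span of $e_1$ is all of $\cx^n$, it suffices to show $r^TWe_1=0$ for every word $W$ in $A$ and $B$. I proceed by induction on $|W|$, with the base cases $|W|\le 1$ coming from direct use of the trace identity $\tr([A,B]M)=\tr(A[B,M])$. The inductive step has two ingredients. A reordering lemma: substituting $AB=BA+e_1r^T$ into $W=UABV$ gives $r^TWe_1=r^T(UBAV)e_1+(r^TUe_1)(r^TVe_1)$, and the product vanishes by induction since $|U|,|V|<|W|$; hence $r^TWe_1$ depends only on the numbers of $A$'s and $B$'s in $W$, and it suffices to treat $W=A^iB^j$. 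For this, $\tr([A,B]A^iB^j)=r^TA^iB^je_1$, while expanding $[B,A^iB^j]=[B,A^i]B^j=-\sum_{k=0}^{i-1}A^ke_1r^TA^{i-1-k}B^j$ gives
$$r^TA^iB^je_1=-\sum_{k=0}^{i-1}r^T\bigl(A^{i-1-k}B^jA^{k+1}\bigr)e_1.$$
By the reordering lemma each summand equals $r^TA^iB^je_1$, collapsing the relation to $(i+1)r^TA^iB^je_1=0$ (the case $i=0$ being directly $\tr([A,B]B^j)=0$).

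With $\mu^{-1}(0)=\{[A,B]=0\}\cap K_n$ in hand, the assignment $(A,B)\mapsto(A,B,e_1)$ embeds $\mu^{-1}(0)$ as a $G$-invariant subvariety of $\tilde H_n$, and any cyclic vector may be moved to $e_1$ by an element of $GL(n,\cx)$ that is unique modulo $G=\mathrm{Stab}(e_1)$. Hence the induced map $\mu^{-1}(0)/G\to\tilde H_n/GL(n,\cx)=(\cx^2)^{[n]}$ is a bijection. Nakajima's principal $GL(n,\cx)$-bundle structure on $\tilde H_n\to(\cx^2)^{[n]}$ restricts over this slice to a principal $G$-bundle, simultaneously yielding properness of the $G$-action on $\mu^{-1}(0)$ and the biholomorphism with $(\cx^2)^{[n]}$. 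The main obstacle is the middle step: one must arrange the double induction so that every reordering correction genuinely involves strictly shorter words, for which the inductive hypothesis applies.
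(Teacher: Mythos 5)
Your proof is correct, and its central step takes a genuinely different route from the paper's. The outer structure matches: the paper also identifies $\mu^{-1}(0)/G$ with Nakajima's model $\tilde H_n/GL(n,\cx)$ via the slice $\{v=e_1\}$, and obtains freeness and properness of the $G$-action by restricting the corresponding properties of the $GL(n,\cx)$-action on $\tilde H_n$ (properness being extracted from the description of $(\cx^2)^{[n]}$ as a $U(n)$ symplectic quotient, which gives local sections and hence a principal bundle). Where you diverge is the key implication $\mu(A,B)=0\Rightarrow[A,B]=0$. The paper argues by contradiction: if $[A,B]=e_1r^T\neq 0$, then $\im[A,B]=\langle e_1\rangle$ would be a one-dimensional cyclic subspace, yet by Guralnick's theorem a pair with rank-one commutator is simultaneously upper-triangularizable, and the image of the commutator of upper-triangular matrices lies in a proper invariant subspace --- contradiction. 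You instead prove $r^TWe_1=0$ for every word $W$ by a double induction combining the trace identity $\tr([A,B]M)=\tr(A[B,M])$ with the reordering relation $r^T(UABV)e_1=r^T(UBAV)e_1+(r^TUe_1)(r^TVe_1)$; the induction is correctly arranged (the correction terms involve strictly shorter words), the collapse to $(i+1)\,r^TA^iB^je_1=0$ is valid over $\cx$, and cyclicity of $e_1$ then forces $r=0$. Your route is elementary and self-contained, at the cost of a longer computation; the paper's is shorter but imports the simultaneous triangularization theorem. One small point you leave implicit: smoothness of $\mu^{-1}(0)$, needed to speak of a biholomorphism, follows from freeness of the action (freeness makes $\mu$ submersive along its zero level) --- the paper records this explicitly.
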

\begin{proof} First of all, we claim that $\mu(A,B)=0$ implies $[A,B]=0$. Indeed, $\mu(A,B)=0$ means that $[A,B]$ can have nonzero entries only in the first row. Suppose that $[A,B]\neq 0$. Then 
  $\im [A,B]=\langle e_1\rangle$ is a $1$-dimensional subspace which is cyclic for $(A,B)$. On the other hand, according to \cite{Gu} (see also  \cite[Lemma 12.7]{EG} for a simple proof) $A$ and $B$ can be simultaneously conjugated into upper-triangular matrices.   But then any vector in $\im [A,B]$ has the last coordinate equal to zero and since no such vector can be cyclic for a pair of upper-triangular matrices, we obtain a contradiction. Hence $[A,B]=0$.
  \par
  The action of $G$ on $\mu^{-1}(0)$ is free, since the action of $GL(n,\cx)$ on $\tilde H_n$ is free. This implies, in particular, that $\mu^{-1}(0)$ is smooth. To show that the action of $G$ on $\mu^{-1}(0)$ is proper it is enough to show that the action of  $GL(n,\cx)$ on $\tilde H_n$ is proper, since $K_n$ is closed in $\tilde H_n$ and $G$ is closed in $GL(n,\cx)$. The properness of the  $GL(n,\cx)$ on $\tilde H_n$ is equivalent to  $\tilde H_n$ being a principal  $GL(n,\cx)$-bundle over $(\cx^2)^{[n]}$. Nakajima \cite[Theorem 3.24 \& Cor.\ 3.42]{Nak} shows that $(\cx^2)^{[n]}$ is a (real) symplectic quotient $\nu^{-1}(c)/U(n)$ of $\tilde H_n$, where $\nu$ denotes the moment map. Since $U(n)$ is compact, $\nu^{-1}(c)$ is a principal $U(n)$-bundle over $(\cx^2)^{[n]}$. This means that $\nu^{-1}(c)\to (\cx^2)^{[n]}$ admits local sections. Such a local section gives a local section of $\tilde H_n\to (\cx^2)^{[n]}$, and, consequently, $\tilde H_n$ is a principal  $GL(n,\cx)$-bundle over $(\cx^2)^{[n]}$. As explained above, this implies that the action of $G$ on $\mu^{-1}(0)$ is proper. It follows that $\mu^{-1}(0)/G$  is a complex manifold \cite[Ch.III, Prop.10]{Bour}, biholomorphic to $\tilde H_n/GL(n,\cx)$.
\end{proof}

\subsection{$(\cx^2)^{[n]}$ and torsion-free sheaves on $\oP^2$\label{notlinear}}
We shall now give a description of an open subset of $(\cx^2)^{[n]}$ consisting of $0$-dimensional subschemes not contained in any line (thus, necessarily, $n\geq 3$). 
It is closely related to  the description of the moduli space $\sM(2,n-1)$ of framed torsion-free sheaves on $\oP^2$ of rank $2$ and $c_2=n-1$ in terms of the ADHM equations \cite[Theorem 2.1]{Nak}. In order to simplify the notation, set $k=n-1$. The moduli space $\sM(2,k)$ is biholomorphic to the $GL(k,\cx)$-quotient of the set $U$ of stable solutions to the equation
\begin{equation} [X,Y]+ij=0,\quad X,Y\in \Mat_{k,k}(\cx),\enskip i\in \Mat_{k,2}(\cx),\enskip j\in \Mat_{2,k}(\cx).\label{ADHM}\end{equation}
Here a quadruple $(X,Y,i,j)$ is called  {\em stable} if there is no proper subspace $S$ of $\cx^k$ such that $\im i\subset S$, $XS\subset S$, $YS\subset S$, and the framing of a sheaf $\sF$ is a trivialisation  on the line $l_\infty\subset \oP^2$ (in particular $c_1=0$).
\begin{lemma} Let $\sF\in \sM(2,k)$. The following conditions are equivalent:
\begin{itemize}
\item[(i)] the rank of $i$ is equal to $1$;
\item[(ii)] $H^0(\oP^2,\sF)\neq 0$;
\item[(iii)]  $\sF$ is an extension of the form
$ 0\to \sO_{\oP^2}\to \sF\to \sI_Z\to 0,$ where $\sI_Z$ is the ideal sheaf of $Z\in \bigl(\oP^2\backslash l_\infty\bigr)^{[k]}$.
\end{itemize}
\end{lemma}
\begin{proof} Given $c_2(\sF)=k$, the conditions (ii) and (iii) are clearly equivalent. We prove the equivalence of (i) and (iii). If $i$ has rank $1$, then $ij=i_0(\alpha_1 j_1 +\alpha_2 j_2)$ for some $i_0\in \cx^k$, $\alpha_1,\alpha_2\in \cx$, where $j_1,j_2$ are the two rows of $j$. Equation \eqref{ADHM} implies that $[X,Y]$ has rank $1$, and therefore $\alpha_1 j_1 +\alpha_2 j_2=0$ \cite[Prop. 2.8]{Nak}. Thus $j$ has also rank $1$ and we can write $\cx^2\simeq W_1\oplus W_2$, where $W_1=\im j=\Ker i$. The sheaf $\sF$ is then isomorphic to  $\Ker b/\im a$, where $(a,b)$ is the monad given on p.\ 23 in \cite{Nak}. The embedding $W_1\otimes \sO_{\oP^2}\hookrightarrow \Ker b$ induces an injective morphism $\sO_{\oP^2}\to \sF$.
Its cokernel is the sheaf  obtained from the monad with $W$ replaced by $W_2$ and $j=0$. This is a rank one framed torsion free sheaf with $c_2=k$, i.e.\ the ideal sheaf of a $Z\in \bigl(\oP^2\backslash l_\infty\bigr)^{[k]}$. Conversely, given $\sF$ as in the statement, we obtain from $\sI_Z$ a stable solution $(X,Y,i_0,0)$ to the ADHM equation with $i_0\in \cx^k$. Let $j_0^T$ be the extension class of $\sF$ in $\Ext^1 (\sI_Z,\sO_{\oP^2})\simeq \cx^k$. Then $(X,Y,i\oplus 0,0\oplus j)$ is a solution to the ADHM equation which yields (the isomorphism class of) $\sF$.
\end{proof}

We denote by $\sM(2,k)^{\rm o}$ the complement of the (isomorphism classes of) sheaves described in this lemma. If $(X,Y,i,j)$ is the ADHM-data corresponding to a point in $\sM(2,k)^{\rm o}$, then $i$ has rank $2$ and
we can use the action of $GL(k,\cx)$ to fix $i$ to be
\begin{equation}i=\begin{pmatrix}1 & 0 & 0 &\dots & 0\\ 0 & 1 & 0 &\dots & 0\end{pmatrix}^T.\label{i}\end{equation}
The stabiliser $G_0$ of $i$ consists of matrices of the form 
$$\begin{pmatrix} 1_{2\times 2} & \ast\\ 0 &  \ast\end{pmatrix},$$
and $\sM(2,k)^{\rm o}$ is also biholomorphic to the quotient of the space $U_0$ of such $(X,Y,i,j)$ by $G_0$. Here ``quotient" means that $U_0$ is a principal $G_0$-bundle over $\sM(2,k)^{\rm o}$. Indeed, a well-known description of $\sM(2,k)$ as a hyperk\"ahler quotient means that $U$ is a principal $GL(k,\cx)$-bundle over $\sM(2,k)$ (cf. the proof of Theorem \ref{G0})
and, consequently, the action  of $GL(k,\cx)$ on $U$ is free and proper. This implies that the action of $G_0$ on $U_0$ is also free and proper, so that $U_0$ is a principal $G_0$-bundle over $\sM(2,k)^{\rm o}$. In fact, $j$ is now superfluous: it is simply given by the first two rows of $[X,Y]$. Equation \eqref{ADHM} is now equivalent to the last $k-2$ rows of $[X,Y]$ being identically zero. Since the projection onto the last $k-2$ rows is precisely the moment map $\mu$ for the action of $G_0$ on pairs of matrices $X,Y$ with respect to the symplectic form $\tr dX\wedge dY$, we conclude:
\begin{proposition} Let $V^{\rm s}$ be the set of pairs $(X,Y)\in \Mat_{k,k}(\cx)^2$ such that $(X,Y,i)$ is stable\footnote{Stability does not depend on $j$.}, where $i$ is given by \eqref{i}. The group $G_0$ acts freely and properly on $\mu^{-1}(0)\subset V^{\rm s}$  and the symplectic quotient $\mu^{-1}(0)/G_0$ is biholomorphic to $\sM(2,k)^{\rm o}$.\hfill$\Box$\label{M(2,k)}\end{proposition}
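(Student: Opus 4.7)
The plan is to formalise the observations immediately preceding the statement: with $i$ in the normal form \eqref{i}, the ADHM equation $[X,Y]+ij=0$ decouples so that $j$ is uniquely determined by $(X,Y)$ while the remaining constraint is exactly the $G_0$-moment map equation. Concretely, for this $i$, the product $ij$ is the $k\times k$ matrix whose first two rows are $j$ and whose last $k-2$ rows are zero. Hence the ADHM equation splits into $j=-\pi^{12}[X,Y]$, which determines $j$ uniquely from $(X,Y)$, together with $\pi^{\geq 3}[X,Y]=0$, where $\pi^{12}$ and $\pi^{\geq 3}$ denote the projections of a $k\times k$ matrix onto its first two and its last $k-2$ rows. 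Forgetting $j$ in one direction and reconstructing it from $[X,Y]$ in the other would then yield a biregular, $G_0$-equivariant isomorphism between $U_0$ and the locus in $V^{\rm s}$ cut out by $\pi^{\geq 3}[X,Y]=0$.

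Next I would identify this locus with $\mu^{-1}(0)$. The Lie algebra $\mathfrak{g}_0\subset\mathfrak{gl}(k,\cx)$ consists of matrices whose first two columns vanish, so its trace-pairing annihilator $\mathfrak{g}_0^\perp$ is the space of matrices supported in the first two rows, and $\mathfrak{g}_0^*$ is naturally identified with matrices supported in the last $k-2$ rows. Since the $GL(k,\cx)$-moment map on $\Mat_{k,k}(\cx)^2$ for the symplectic form $\tr\, dX\wedge dY$ is $(X,Y)\mapsto[X,Y]$, restriction to $G_0$ produces precisely $\pi^{\geq 3}[X,Y]$, matching the $\mu$ of the statement.

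Combining these two steps gives a $G_0$-equivariant biregular isomorphism $U_0\cong \mu^{-1}(0)\cap V^{\rm s}$, and transporting the principal $G_0$-bundle structure of $U_0\to\sM(2,k)^{\rm o}$ (recalled before the statement) across this isomorphism simultaneously yields freeness and properness of the $G_0$-action on $\mu^{-1}(0)$ together with the biholomorphism $\mu^{-1}(0)/G_0\cong\sM(2,k)^{\rm o}$. The only step that genuinely requires care is the computation of $\mathfrak{g}_0^\perp$, which fixes the precise form of the moment map; once this is correctly pinned down, the remainder of the argument is formal and closely parallels the end of the proof of Theorem \ref{G0}.
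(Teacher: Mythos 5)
Your proposal is correct and follows essentially the same route as the paper, whose ``proof'' is the paragraph preceding the proposition: for $i$ in the normal form \eqref{i} the product $ij$ is supported in the first two rows, so $j$ is recovered from $[X,Y]$ and the residual ADHM constraint is exactly the vanishing of the last $k-2$ rows of $[X,Y]$, i.e.\ the $G_0$-moment map; freeness, properness and the biholomorphism are then transported from the principal $G_0$-bundle $U_0\to\sM(2,k)^{\rm o}$. Your explicit identification of $\fG_0^\ast$ with matrices supported in the last $k-2$ rows just makes precise a step the paper states without computation.
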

\begin{remark} Clearly, there is an analogous description of an open dense subset of $\sM(r,k)$ for sheaves of  higher rank $r$.\end{remark}
Given a quadruple $(X,Y,i,j)$ (not necessarily satisfying \eqref{ADHM} or \eqref{i}), we define a pair of $(k+1)\times (k+1)$-matrices as follows (cf. \cite{BP}):
\begin{equation}\hat X=\begin{pmatrix} 0 & -j_2\\ i_1 & X\end{pmatrix},\enskip \hat Y=\begin{pmatrix} 0 & j_1\\ i_2 & Y\end{pmatrix},\label{hatXY}\end{equation}
where $i_s$ (resp. $j_s$) denotes the $s$-th column (resp. $s$-th row) of $i$ (resp. of $j$).
\begin{lemma} $(X,Y,i,j)$ satisfies \eqref{ADHM} if and only if $[\hat X,\hat Y]$ has nonzero entries only in the first row or the first column.
Moreover $(X,Y,i,j)$ is stable if and only if $e_1$ is cyclic for $(\hat X,\hat Y)$.\label{equiv}\end{lemma}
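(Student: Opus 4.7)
The plan is to prove the two assertions separately by direct block-matrix calculation for (a) and by a bijection of invariant subspaces for (b).

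For (a), I would simply compute $[\hat X,\hat Y]$ in block form. Using the recipe for $\hat X,\hat Y$ from \eqref{hatXY}, one finds
$$\hat X\hat Y=\begin{pmatrix} -j_2i_2 & -j_2Y\\ Xi_2 & i_1j_1+XY\end{pmatrix},\qquad \hat Y\hat X=\begin{pmatrix} j_1i_1 & j_1X\\ Yi_1 & -i_2j_2+YX\end{pmatrix},$$
so the lower-right $k\times k$ block of $[\hat X,\hat Y]$ equals $[X,Y]+i_1j_1+i_2j_2=[X,Y]+ij$, while the first row and first column of $[\hat X,\hat Y]$ carry the entries $-j_sY$, $-j_sX$, $Xi_2-Yi_1$, $-j_2i_2-j_1i_1$ with no further constraints. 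Hence $[\hat X,\hat Y]$ is concentrated in the first row and first column if and only if the lower-right block vanishes, which is precisely equation \eqref{ADHM}.

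For (b), the key idea is to establish a bijection
$$\bigl\{W\subset\cx^{k+1}\colon W\ni e_1,\ \hat X W\subset W,\ \hat Y W\subset W\bigr\}\longleftrightarrow\bigl\{S\subset\cx^k\colon \im i\subset S,\ XS\subset S,\ YS\subset S\bigr\}$$
sending $W$ to $S:=W\cap\mathrm{span}(e_2,\dots,e_{k+1})$, with inverse $S\mapsto\cx e_1\oplus S$. Since $e_1\in W$, the decomposition $W=\cx e_1\oplus(W\cap\mathrm{span}(e_2,\dots,e_{k+1}))$ is automatic, so the map is well defined. Observing that $\hat X e_1=\bigl(\begin{smallmatrix}0\\ i_1\end{smallmatrix}\bigr)$ and $\hat Y e_1=\bigl(\begin{smallmatrix}0\\ i_2\end{smallmatrix}\bigr)$ forces $\im i\subset S$, and for $v\in S$ the identity $\hat X v=\bigl(\begin{smallmatrix}-j_2v\\ Xv\end{smallmatrix}\bigr)$ together with $\hat X v\in W=\cx e_1\oplus S$ forces $Xv\in S$; the same argument with $\hat Y$ gives $Ys\subset S$. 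Conversely, for any $(X,Y)$-invariant $S\supset\im i$, the same formulas show $\cx e_1\oplus S$ is $(\hat X,\hat Y)$-invariant.

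Applying this bijection to the smallest invariant subspaces on each side, I get $\cx[\hat X,\hat Y]\cdot e_1=\cx e_1\oplus S_{\min}$, where $S_{\min}$ is the smallest $(X,Y)$-invariant subspace of $\cx^k$ containing $\im i$. Thus $e_1$ is cyclic for $(\hat X,\hat Y)$ if and only if $S_{\min}=\cx^k$, which is exactly the stability condition. I do not anticipate a real obstacle: part (a) is a mechanical block computation, and part (b) reduces to the observation above that any $(\hat X,\hat Y)$-invariant subspace containing $e_1$ splits as $\cx e_1$ plus its intersection with the span of $e_2,\dots,e_{k+1}$; the only point that needs a moment's care is verifying that under this splitting $\hat X$- and $\hat Y$-invariance of $W$ translates exactly into $X$- and $Y$-invariance of $S$ together with $\im i\subset S$.
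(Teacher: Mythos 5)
Your proof is correct and follows essentially the same route as the paper: part (a) is the same block computation the paper dismisses as obvious, and for part (b) the paper also passes between $(\hat X,\hat Y)$-invariant subspaces containing $e_1$ and $(X,Y)$-invariant subspaces containing $\im i$, using the quotient projection $\cx^{k+1}\to\cx^{k+1}/\langle e_1\rangle$ where you use the complementary intersection $W\cap\mathrm{span}(e_2,\dots,e_{k+1})$ --- the same thing since $e_1\in W$. No gaps.
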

\begin{proof} The first statement is obvious. For the second one, observe that if $S$ is a destabilising subspace for $(X,Y,i,j)$, then $S\oplus\langle e_1\rangle$ is invariant for $(\hat X,\hat Y)$. Conversely, suppose that $\hat S$ contains $e_1$ and is  invariant for $(\hat X,\hat Y)$. Let $p:\cx^{k+1}\to \cx^{k}=\cx^{k+1}/\langle e_1\rangle$ be the projection and set $S=p(\hat S)$. Then $i_1=p(\hat X e_1)$, $i_2=p(\hat Y e_1)$ belong to $S$. Moreover, if $v\in S$, then there is an $\alpha\in\cx$ such that $\alpha e_1+v\in \hat S$. Since 
$$\hat X(\alpha e_1+v)=\alpha i_1 +Xv \mod\langle e_1\rangle $$ and similarly for $Y$, it follows that $XS\subset S$ and $YS\subset S$. Therefore $S$ is a destabilising subspace for $(X,Y,i,j)$.
\end{proof}
\par
We now fix $i$ to be \eqref{i}. Let $\hat G_0$ denote the subgroup of $GL(k+1,\cx)$, consisting of matrices of the form
$$ \begin{pmatrix} 1 & u\\ 0 & h\end{pmatrix},\quad h\in G_0, \enskip u_1=u_2=0,$$
i.e. matrices, the first $3$ columns of which are $(e_1,e_2,e_3)$.
$\hat G_0$ is a semidirect product of $G_0$ and $\cx^{k-2}$. Its action by conjugation on $\hat X,\hat Y$ defined in \eqref{hatXY} is given by
$$ \hat X\mapsto \begin{pmatrix} 0 & -j_2h^{-1}+uXh^{-1}\\ i_1 & -i_1uh^{-1}+hXh^{-1}\end{pmatrix},\quad \hat Y\mapsto \begin{pmatrix} 0 & j_1h^{-1}+uYh^{-1}\\ i_2 & -i_2uh^{-1}+hYh^{-1}\end{pmatrix}.$$
This means that $\hat G_0$ acts on pairs of matrices $X,Y$ via
\begin{equation} (X,Y)\mapsto (-i_1uh^{-1}+hXh^{-1},-i_2uh^{-1}+hYh^{-1}).\label{action}\end{equation}
Owing to Lemma \ref{equiv}, $\hat G_0$ preserves $V^{\rm s}$ of Proposition \ref{M(2,k)}. 
\par
We consider the following codimension $2$ subset of $V^{\rm s}$:
$$V^{\rm s}_0=\{(X,Y)\in V^{\rm s}; X_{12}=Y_{11},\enskip X_{22}=Y_{21}\}.$$
It is $\hat G^0$-invariant and the restriction of the symplectic form $\tr dX\wedge dY$ to $V_0^{\rm s}$ is nondegenerate. 
\begin{theorem} Let $\hat\mu$ denote the moment map for the action of $\hat G^0$ on $V^{\rm s}_0$. The action of $\hat G^0$  on $\hat\mu^{-1}(0)$ is free and proper and the symplectic quotient $\hat\mu^{-1}(0)/\hat G_0$ is biholomorphic to the open subset of  $(\cx^2)^{[k+1]}$ consisting of $0$-dimensional subschemes not contained in any line.
\label{nonlin}\end{theorem}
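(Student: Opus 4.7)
The plan is to send each $(X,Y)\in V_0^s\cap\hat\mu^{-1}(0)$ to the pair of $(k{+}1)\times(k{+}1)$ matrices $(\hat X,\hat Y)$ of \eqref{hatXY}, with $j$ reconstructed from the first two rows of $[X,Y]$, and then invoke Theorem \ref{G0}. The crux is to check that $\hat X$ and $\hat Y$ commute. Decomposing $\hat G_0=G_0\ltimes\cx^{k-2}$, the $G_0$-component of $\hat\mu=0$ is, as in Proposition \ref{M(2,k)}, the vanishing of the last $k-2$ rows of $[X,Y]$, i.e.\ the ADHM equation $[X,Y]+ij=0$; a direct Hamiltonian computation with $\tr\,dX\wedge dY$ shows that the $\cx^{k-2}$-component reads $X_{q2}=Y_{q1}$ for $q\geq 3$, which together with the two defining equations of $V_0^s$ amounts to $Xi_2=Yi_1$. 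Plugging these into the block expression behind Lemma \ref{equiv} shows that $[\hat X,\hat Y]$ is supported only in the first row, and its $(1,1)$-entry equals $[X,Y]_{11}+[X,Y]_{22}$, which vanishes because $\tr[X,Y]=0$ and the last $k-2$ diagonal entries of $[X,Y]$ are zero by ADHM. Hence $[\hat X,\hat Y]$ has rank at most one. By Lemma \ref{equiv}, stability of $(X,Y,i)$ makes $e_1$ cyclic for $(\hat X,\hat Y)$, and the simultaneous triangularisation argument used in the proof of Theorem \ref{G0} then rules out the rank-one case, forcing $[\hat X,\hat Y]=0$.

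Once commutativity is in hand, $(\hat X,\hat Y,e_1)\in\tilde H_{k+1}$ and by Theorem \ref{G0} it defines a point of $(\cx^2)^{[k+1]}$; this point is non-collinear, since $a\hat X+b\hat Y+cI=0$ with $(a,b)\neq 0$ would force $c=0$ from the $(1,1)$-entry and $ae_1+be_2=0$ from the bottom-left block, a contradiction. Conversely, any non-collinear length-$(k{+}1)$ ideal $I\subset\cx[z,w]$ has $1,z,w$ linearly independent modulo $I$; completing them to a basis and reading off the matrices of multiplication by $z$ and $w$ produces $(\hat X,\hat Y)$ in the block form \eqref{hatXY} with $i$ as in \eqref{i}, and reversing the above computation shows the associated $(X,Y)$ lies in $V_0^s\cap\hat\mu^{-1}(0)$. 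Two completions to a basis differ by an element of $\hat G_0$, so the orbit is canonical; injectivity on orbits follows by the same token, since a $g\in GL(k{+}1,\cx)$ conjugating two such triples must fix $e_1$, and the relations $\hat Xe_1=e_2$, $\hat Ye_1=e_3$ then force $ge_2=e_2$, $ge_3=e_3$, whence $g\in\hat G_0$.

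To upgrade this set-theoretic bijection to a biholomorphism I mimic the last part of the proof of Theorem \ref{G0}: the map $(X,Y)\mapsto(\hat X,\hat Y,e_1)$ is a $\hat G_0$-equivariant inclusion of $V_0^s\cap\hat\mu^{-1}(0)$ into $\tilde H_{k+1}$, which by \cite{Nak} is a principal $GL(k{+}1,\cx)$-bundle over $(\cx^2)^{[k+1]}$. Freeness and properness of the $\hat G_0$-action on $V_0^s\cap\hat\mu^{-1}(0)$ are therefore inherited from those of the $GL(k{+}1,\cx)$-action on $\tilde H_{k+1}$, and \cite[Ch.III, Prop.10]{Bour} then makes the quotient a complex manifold, biholomorphic via the bijection above to the non-collinear open subset of $(\cx^2)^{[k+1]}$. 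The main obstacle is the commutativity step: the moment-map equations only directly imply that $[\hat X,\hat Y]$ has rank at most one, and one needs cyclicity of $e_1$ combined with the simultaneous triangularisation trick to eliminate the residual rank-one freedom.
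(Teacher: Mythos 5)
Your proposal is correct and follows essentially the same route as the paper: split $\hat\mu$ into its $G_0$- and $\cx^{k-2}$-components, translate the vanishing of the moment map (together with the defining equations of $V_0^{\rm s}$) into $[\hat X,\hat Y]$ being supported in the first row, kill the residual rank-one part via cyclicity of $e_1$ (Lemma \ref{equiv}) and the simultaneous-triangularisation argument from Theorem \ref{G0}, and identify the quotient with the non-collinear locus through the first-column conditions $\hat Xe_1=e_2$, $\hat Ye_1=e_3$. Your explicit check that the $(1,1)$-entry vanishes by the trace argument is a slightly more careful bookkeeping of a step the paper absorbs into the triangularisation argument, but it is not a different method.
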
 
\begin{proof} 
The moment map for the action of $\hat G_0\simeq G_0\ltimes \cx^{k-2}$ is equal to $ \hat\mu(X,Y)=(\mu(X,Y), \alpha(X,Y))$, where
$$\alpha(X,Y)=(X_{32}-Y_{31},\dots,X_{k2}- Y_{k1})^T.$$ 
Thus, given the definition of $V^{\rm s}_0$, the condition $\alpha(X,Y)=0$ means that the first column of $Y$ is equal to the second column of $X$. This in turn implies that the first column of $[\hat X,\hat Y]$ is equal to $0$. On the other hand   $\mu(X,Y)=0$ means that the last $k-2$ rows of $[X,Y]$ are zero. If we now take $j_1$ to be the first row of $[Y,X]$ and $j_2$  the second row of $[X,Y]$, then $[\hat X,\hat Y]$ has nonzero entries only in the first row. The argument in the proof of Theorem \ref{G0} together with Lemma \ref{equiv} imply that  $[\hat X,\hat Y]=0$. Let $K_{k+1}^0$ be the set of pairs $(A,B)\in \Mat_{k+1,k+1}(\cx)^2$ such that $e_1$ is cyclic for $(A,B)$, the first column of $A$ is equal to $e_2$, and the first column of $B$ is equal to $e_3$. The above discussion shows that the map $V^{\rm s}_0\to K_{k+1}^0$,  sending $(X,Y)$ to the matrices $\hat X,\hat Y$ defined above, restricts to a $\hat G_0$-equivariant isomorphism between $ \hat\mu^{-1}(0)$ and $T=\{[A,B]\in K_{k+1}^0;\, [A,B]=0\}$. The action of $\hat G_0$ on $T$ is free and proper, owing to the same argument as in the proof of Theorem \ref{G0}. It remains to show that $T/ \hat G_0$ is biholomorphic to the open subset of  $(\cx^2)^{[k+1]}$ consisting of $0$-dimensional subschemes not contained in any line.
\par
Let $H^0\subset (\cx^2)^{[k+1]}$ be the set of $0$-dimensional subschemes  not contained in any line. If $D\in H^0$ has ideal $I$,  then $1,z_1,z_2$ are linearly independent elements of $\cx[z_1,z_2]/I$. Therefore we can choose a basis $\cx[z_1,z_2]/I$ such that $1,z_1,z_2$ are its first $3$ elements. This means that the resulting commuting matrices $A,B$ belong to $K_n^0$. The group preserving  $K_n^0\subset K_n$ is precisely $ \hat G_0$, so that $H^0\subset T/ \hat G_0$. Conversely, let $D$ be a $0$-dimensional subscheme of $\cx^2$ represented by an element of $ T/ \hat G_0$. The ideal of $D$ is given by polynomials $p(z_1,z_2)$ vanishing on $(A,B)$. Given the form of the first column of $A$ and of $B$,  no linear polynomial vanishes on $(A,B)$.
\end{proof}

\section{Curves and commuting matrices}

We now wish to describe a correspondence between finite coverings of $\oP^1$ and certain commuting matrix polynomials.

\subsection{Flat projections}
Let $C$ be a  connected curve (cf.\ Remark \ref{curve}) of arithmetic genus $g$ and $\pi:C\to \oP^1$ a flat projection of degree $d$. We consider the sheaf of algebras $\pi_\ast \sO_C$. As a sheaf of $\sO_{\oP^1}$-modules it is locally free, i.e. a vector bundle of rank $d$, which we denote by $E_\pi$. It has a trivial summand and all remaining summands have a negative degree. Moreover the degree of $E_\pi$ is equal to $-(d+g-1)$ (since $\chi(E_\pi)=\chi(\sO_C)=1-g$). Consider now the regular representation of $\pi_\ast \sO_C$ on itself. It gives a global injective morphism $\pi_\ast \sO_C\to \sEnd(E_\pi)$, i.e. a global section of $\sEnd(E_\pi)\otimes E_\pi^\ast$. Suppose that
\begin{equation} E_\pi\simeq  \bigoplus_{i=1}^{d}\sO_{\oP^1}(-k_i),\label{Epi}\end{equation}
where $0= k_1<k_2\leq \dots\leq k_{d-1}$. Then a global section of $\sEnd(E_\pi)\otimes E_\pi^\ast$ corresponds to $d$ $d\times d$ matrices $A_1,\dots, A_{d}$ of polynomials in one variable with $A_1=1$ and the $(i,j)$ entry of $A_l$ having degree $k_j-k_i+k_l$. Moreover these matrices commute, since $ \sO_C$ is commutative.  Finally, since each stalk of $\pi_\ast \sO_C$ is a $d$-dimensional algebra, we have $\dim \cx[A_2(t),\dots, A_{d}(t)]=d$ for each $t$. Clearly the freedom in choosing the matrices $A_l$ is equivalent to the choice of the isomorphism \eqref{Epi}.

Conversely, let $R$ be such a $d$-dimensional commutative subalgebra of $\Mat_{d\times d}(\cx[t])$ with identity. Then $R$ is integral over $\cx[t]$. Consider the matrices $A^\prime_l$ over $\cx[1/t]$ obtained by conjugating $A_l$ by $\diag(t^{k_1},\dots,t^{k_{d}})$ and dividing by $t^{k_l}$. We obtain a  commutative ring $R^\prime$, integral over $\cx[1/t]$. The affine curves $\Spec R$ and $\Spec R^\prime$ have an obvious gluing and we obtain a curve $C$  together with a projection $C\to \oP^1=\Spec \cx[t]\cup \Spec \cx[1/t]$. Each fibre $C_t$ is a $0$-dimensional scheme of length $d$ and so the projection $C\to \oP^1$ is flat. It is clear that the constructions are inverse to each other.

\begin{proposition} There exists a natural $1-1$ correspondence between:

\vspace{2mm}

{\rm ({\bf A})} connected  curves $C$  with a flat projection $\pi$ of degree $d$ onto $\oP^1$ and a fixed isomorphism
$$\pi_\ast\sO_C\simeq \sO_{\oP^1}\oplus \bigoplus_{i=2}^{d}\sO_{\oP^1}(-k_i),$$

\vspace{2mm}

{\rm ({\bf B})} $d-1$-tuples of commuting $d\times d$ matrix polynomials $A_2(t),\dots,A_d(t)$ such that
\begin{itemize}
\item[(i)] the first column of $A_l$ is the constant vector $e_l$;
\item[(ii)] the $(i,j)$ entry of $A_l$ has degree $k_j-k_i+k_l$;
\item[(iii)] $\dim \cx[A_2(t),\dots,A_{d}(t)]=d$ for any $t\in \oP^1$.\hfill $\Box$
\end{itemize}
\end{proposition}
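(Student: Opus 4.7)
The plan is to make the explicit constructions sketched before the proposition into well-defined maps in both directions and then check they are mutually inverse; the arguments are largely bookkeeping, so the work is in identifying exactly which condition is used where.

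For (\textbf{A}) $\to$ (\textbf{B}): given $(C,\pi)$ with the chosen identification $\pi_\ast\sO_C \simeq \sO \oplus \bigoplus_{i=2}^{d} \sO(-k_i)$, I would take the regular representation $\pi_\ast\sO_C \to \sEnd(\pi_\ast\sO_C)$ coming from the multiplication on $\pi_\ast\sO_C$. Viewed as a global section of $E_\pi^\ast \otimes \sEnd(E_\pi) = \bigoplus_{i,j,l}\sO(k_l + k_i - k_j)$, this yields $d$ matrices $A_1,\dots,A_d$ of polynomials with the $(i,j)$ entry of $A_l$ a section of $\sO(k_j-k_i+k_l)$, giving (ii) (and forcing the entry to be $0$ if this bound is negative). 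Commutativity of $\pi_\ast\sO_C$ gives commutativity of the $A_l$. The first basis vector lies in the $\sO$-summand and is the unit of the algebra, so $A_1=\mathrm{Id}$; applying $A_l$ to it returns the $l$-th basis vector, which in coordinates is the constant vector $e_l$, yielding (i). Finally, for each $t\in\oP^1$ the fibre of $\pi_\ast\sO_C$ at $t$ is a $d$-dimensional commutative algebra equal to $\cx[A_2(t),\dots,A_d(t)]$, which is (iii).

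For (\textbf{B}) $\to$ (\textbf{A}): I would form the $\cx[t]$-subalgebra $R=\cx[t]\langle A_2,\dots,A_d\rangle \subseteq \Mat_{d\times d}(\cx[t])$. Condition (i) makes the first columns of $A_1=\mathrm{Id},A_2,\dots,A_d$ the constant vectors $e_1,\dots,e_d$, so any $\cx[t]$-linear relation among them is trivial; together with (iii), which bounds the generic rank, this shows that $A_1,\dots,A_d$ form a $\cx[t]$-basis for $R$, so $R$ is free of rank $d$. Using (ii), the matrices $A_l' = t^{-k_l} D A_l D^{-1}$ with $D = \diag(t^{k_1},\dots,t^{k_d})$ have entries in $\cx[1/t]$ and define an analogous algebra $R'$ of rank $d$ over $\cx[1/t]$. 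On the overlap $\Spec\cx[t,1/t]$, conjugation by $D$ is an automorphism of $\Mat_{d\times d}(\cx[t,1/t])$ that sends the basis $(A_l)$ of $R$ to $(t^{k_l} A_l')$, so $\Spec R$ and $\Spec R'$ glue to a scheme $C$ with a finite morphism $\pi\colon C\to\oP^1$; the transition factors $t^{k_l}$ between the two local bases give precisely the sum $\sO\oplus\bigoplus\sO(-k_i)$ as $\pi_\ast\sO_C$. Connectedness amounts to $H^0(\sO_C)=\cx$, which follows since the $\sO$-summand has $h^0 = 1$ and the others $h^0=0$.

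The necessary structural property — that $C$ is a curve in the paper's sense (CM, pure of dimension $1$) — is where I would exercise care. Flatness of $\pi$ reduces to the fibres having constant length: each fibre is $\cx[A_2(t),\dots,A_d(t)]$, of dimension $d$ by (iii). Since the base $\oP^1$ is regular of dimension $1$ and fibres are Artinian (hence CM), $C$ is CM of pure dimension $1$, with no embedded or isolated points. The two constructions are now visibly mutually inverse: starting from $(C,\pi,\varphi)$, applying the construction and then reading off the regular representation returns the original matrices by definition; starting from matrices, forming $R$, and then computing its regular representation on itself with respect to the basis $A_1,\dots,A_d$ returns the same matrices tautologically. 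The main obstacle I anticipate is not any single deep step but the careful matching of the $(i,j,l)$ degree bounds with the direct sum decomposition across both affine charts — in particular checking that the rescaling by $t^{-k_l}$ in the definition of $A_l'$ is exactly what is needed so that multiplication by the $l$-th basis element of $\pi_\ast\sO_C$, which transforms by $t^{k_l}$ between charts, is represented by matrices consistent on the overlap.
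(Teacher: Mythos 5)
Your proposal is correct and follows essentially the same route as the paper: the forward direction via the regular representation of $\pi_\ast\sO_C$ on itself read off against the chosen splitting, and the reverse direction by gluing $\Spec R$ to $\Spec R'$, where $R'$ is obtained by conjugating by $\diag(t^{k_1},\dots,t^{k_d})$ and rescaling by $t^{-k_l}$. The extra details you supply (that condition (i) forces the $A_l$ to be a $\cx[t]$-basis of $R$, the connectedness via $h^0$, and the Cohen--Macaulay check) are points the paper leaves implicit, and they are all verified correctly.
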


\subsection{Curves in projective spaces\label{projective}} 
Suppose now that $C$ is a connected curve of degree $d$ in $\oP^r\backslash \oP^{r-2}$, not contained in any hyperplane, and let the projection $\pi$ be the restriction of the projection 

\vspace{-2.5mm}

$$[z_1,\dots,z_{r+1}]\mapsto [z_r,z_{r+1}].$$

\vspace{0.3mm}

Each $z_i$, $i=1,\dots,r-1$, defines a direct summand of $\pi_\ast\sO_C$, isomorphic to $\sO_{\oP^1}(-1)$. Moreover, since $C$ is not contained in a hyperplane, these summands are linearly independent, and, consequently, $k_2=\dots=k_r=1$ in \eqref{Epi}.
 Since $\sO_{C_t}$ is generated by $z_1,\dots,z_{r-1}$, the matrices $A_2(t),\dots,A_{r}(t)$ generate $\cx[A_2(t),\dots, A_{d}(t)]$ for each $t$.  We conclude:
\begin{proposition} There exists a natural $1-1$ correspondence between:

\vspace{2mm}

{\rm ({\bf A})} pairs $(C,\phi)$, where $C$ is a connected curve $C$ of degree $d$ in $\oP^r\backslash \oP^{r-2}$ not contained in any hyperplane and such that the projection $\pi$ onto the complementary $\oP^1$ is flat, while $\phi$ is a fixed isomorphism

\vspace{-2mm}

$$\pi_\ast\sO_C\simeq \sO\oplus \sO(-1)^{\oplus r-1}\oplus \bigoplus_{i=r+1}^{d}\sO(-k_i),$$

\vspace{-4mm}

and

\vspace{2mm}

{\rm ({\bf B})} $(r-1)$-tuples of commuting $d\times d$ matrix polynomials $A_2(t),\dots, A_{r}(t)$ such that
\begin{itemize}
\item[(i)] the first column of $A_l$ is the constant vector $e_l$, $l=2,\dots,r$;
\item[(ii)] the $(ij)$-entry of $A_l$ has degree $k_j-k_i+k_l$ (here $k_1=0$ and $k_2=\dots=k_r=1$); 
\item[(iii)] for any $t\in \oP^1$, $\cx[A_2(t),\dots,A_{r}(t)]$ has dimension $d$ and is conjugate to its image in $\End(\cx^d)$ under the regular representation.\hfill $\Box$
\end{itemize}\label{correspondence}
\end{proposition}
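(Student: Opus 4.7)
My plan is to derive this proposition from the preceding unlabelled proposition on flat projections, keeping track of the extra structure coming from the embedding into $\oP^r\setminus \oP^{r-2}$ and invoking Proposition \ref{bijection} fibrewise.

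For the direction $({\bf A})\Rightarrow ({\bf B})$, I would apply the preceding proposition to $\pi: C \to \oP^1$ to obtain the full commuting tuple $A_1=I, A_2,\dots, A_d$ in $\Mat_{d\times d}(\cx[t])$. The paragraph immediately before the statement already explains why $k_2=\dots=k_r=1$: each coordinate $z_i$ for $i=1,\dots,r-1$ gives an injection $\sO(-1)\hookrightarrow \pi_\ast\sO_C$ onto a distinct negative-degree summand, using that $C$ lies in no hyperplane. The matrices $A_2,\dots,A_r$ correspond under $\phi$ to multiplication by $z_1,\dots,z_{r-1}$ in the chosen basis, so (i) and (ii) follow directly from the preceding proposition with these specific $k_i$. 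For (iii), each fibre $C_t$ sits inside the affine piece $\oA^{r-1}$ of $\oP^r\setminus \oP^{r-2}$; since the coordinate ring of a $0$-dimensional subscheme of affine space is always generated as an algebra by the ambient coordinates, $\cx[A_2(t),\dots,A_r(t)]$ has dimension $d$, and Proposition \ref{bijection} then supplies the conjugacy to the image under the regular representation.

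For the direction $({\bf B})\Rightarrow ({\bf A})$, I would first use condition (iii) and Proposition \ref{bijection} fibrewise: at each $t$ the tuple $(A_2(t),\dots,A_r(t))$ corresponds to a length-$d$ subscheme $D_t\subset \cx^{r-1}$. These assemble into a flat affine family $C^{\rm aff}\subset \oA^1\times \oA^{r-1}$ whose ideal in $\cx[t,y_1,\dots,y_{r-1}]$ is the kernel of the algebra map $y_l\mapsto A_{l+1}(t)$. Condition (ii) on degrees permits the analogous construction on the chart $\Spec\cx[1/t]$, using the twisted matrices described before the earlier proposition, and the two pieces glue to a projective curve $C\subset \oP^r$ carrying a flat projection $\pi$ onto $\oP^1$. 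Conditions (i) and (iii) together guarantee that $I, A_2,\dots, A_r$ have linearly independent first columns $e_1,\dots,e_r$ at every $t$, which is precisely the assertion that $C$ misses $\oP^{r-2}=\{z_r=z_{r+1}=0\}$ and is not contained in any hyperplane. The isomorphism $\phi$ is then read off from the tautological presentation of $\pi_\ast\sO_C$ as the subalgebra of $\Mat_{d\times d}(\sO_{\oP^1})$ generated by $A_2,\dots,A_r$, with the first $r$ summands identified via the first columns of $I, A_2,\dots, A_r$ and the remaining summands via the tail of the basis encoded in the $d\times d$ matrix structure.

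The most delicate step is verifying that the two constructions are genuinely mutually inverse. On the one hand, I must check that the degree bookkeeping in (ii) matches the splitting type $(0,1,\dots,1,k_{r+1},\dots,k_d)$ of $\pi_\ast\sO_C$ after the gluing of the two affine charts of $\oP^1$; on the other hand, the correspondence must send, and be recovered from, the choice of basis of the tail $\bigoplus_{i\geq r+1}\sO(-k_i)$ that is implicit in the $d\times d$ size of the matrices $A_2,\dots,A_r$ (these matrices depend on the full basis, not only on the images of $1,z_1,\dots,z_{r-1}$). Once this compatibility is confirmed, both directions of the correspondence compose to the identity and the proposition follows.
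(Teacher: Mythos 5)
Your proposal follows essentially the same route as the paper: the paper's entire argument is the paragraph preceding the statement (each $z_i$, $i\le r-1$, maps $\sO(-1)$ isomorphically onto a distinct negative summand of $\pi_\ast\sO_C$ since $C$ spans $\oP^r$, forcing $k_2=\dots=k_r=1$, and $z_1,\dots,z_{r-1}$ generate $\sO_{C_t}$, giving condition (iii)), with the converse inherited from the general flat-projection correspondence exactly as you describe. Your additional bookkeeping for the inverse direction and the basis-dependence of the tail is a correct elaboration of what the paper leaves implicit.
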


\begin{example} Let $C$ be a smooth curve of degree $r$ in $\oP^r\backslash \oP^{r-2}$. Such a curve is cut out by the $2\times 2$ minors of a $2\times r$ matrix of linear forms in homogeneous coordinates $z_1,\dots,z_{r+1}$. Thus we obtain $\begin{pmatrix} r\\ 2\end{pmatrix}$ quadratic equations. Since we assume that the projection onto $[z_r,z_{r+1}]$ is flat, the matrix of coefficients of $z_iz_j$, $i,j\leq r-1$, is invertible and we can write the equations in affine coordinates $t=z_r/z_{r+1}$, $x_i=z_i/z_{r+1}$, $i\leq r-1$,
as
$$ x_ix_j=\sum_{k=1}^{r-1}a_{ij}^k(t)x_k+b_{ij}(t),\quad i,j=1,\dots r-1,$$
where $a_{ij}$ are linear in $t$ and $b_{ij}$ is quadratic in $t$. It follows that the commuting matrices $A_2(t),\dots, A_{r}(t)$ are given by
$$A_l(t)=\begin{pmatrix} 0 & b_{l1}(t) & \dots & b_{l,r-1}(t)\\
                           0 & a_{l1}^1(t) &\dots & a_{l,r-1}^1(t)\\
                           \vdots & \vdots &&\vdots \\
                           1 & a_{l1}^l(t) &\dots & a_{l,r-1}^l(t)\\
                           \vdots & \vdots &&\vdots \\
                           0 & a_{l1}^{r-1}(t) &\dots & a_{l,r-1}^{r-1}(t)\\
\end{pmatrix}.
$$
\label{canonical}\end{example}

\vspace{2mm} 

\begin{example} The correspondence in Proposition \ref{correspondence} commutes with projections. Thus, if $C$ is a curve of degree $d$ in $\oP^r\backslash \oP^{r-2}$ obtained by projection onto the first $r$ coordinates from a curve $\hat C$ in $\oP^n\backslash \oP^{n-2}$, then the matrix polynomials $A_2(t),\dots,A_{r}(t)$ corresponding to the curve $C$ are simply the first $r-1$ matrix polynomials corresponding to the curve $\hat C$.\label{proj}
\end{example}

\begin{example} Let $C$ be a rational curve of degree $4$ in $\oP^3$ parametrised by $[u^3v,v^3u,u^4,v^4]$. We can either use the previous example or proceeed directly as follows: if we set $x=z_1/z_4,y=z_2/z_4,t=z_3/z_4$, then the ideal of $C$ is generated by 
$$ xy-t,\enskip t^2y-x^3,\enskip ty^2-x^2, \enskip y^3-x.$$
It follows that $\sO(C_t)$ is spanned by $1,x,y,y^2$ for $t\neq \infty$ and by $1,x,y,x^2$ for $t\neq 0$.
Computing the endomorphisms $x\cdot(\;)$ and $y\cdot(\;)$ gives:
$$ A=\begin{pmatrix}  0 & 0 & t & 0\\
1 & 0 &0&0\\0&0&0&t\\0&t&0&0\end{pmatrix},\quad B=\begin{pmatrix} 0&t&0&0\\0&0&0&1\\1&0&0&0\\0&0&1&0\end{pmatrix}.$$
\end{example}

\begin{example} Let $C$  a complete intersection of two quadrics in $\oP^3$, given by equations $Q_1(x,y,t)=0$, $Q_2(x,y,t)=0$. Let $\tilde Q_1(x,y), \tilde Q_2(x,y)$ be the parts involving only $x,y$ and let $\tilde Q_3(x,y)$ be the quadratic polynomial in $x,y$ independent of $\tilde Q_1, \tilde Q_2$. Then $\sO(C_t)$ is spanned by $1,x,y,\tilde Q_3$, and hence $E_\pi\simeq \sO_{\oP^1}\oplus\sO_{\oP^1}(-1)\oplus\sO_{\oP^1}(-1)\oplus\sO_{\oP^1}(-2)$. In particular, the arithmetic genus of $C$ is $1$. Generically, such a $C$ is a smooth elliptic curve, but if $Q_1$ and $Q_2$ are two of the three quadrics defining a twisted cubic, then $C$ is the union of this cubic and a line, intersecting in two points. \label{112}
\end{example}

\section{Space curves\label{space}}


We consider the fibrewise Hilbert scheme $Z_d$ of $d$ points for the projection $\pi:\oP^3\backslash \oP^1\to \oP^1$ (see \cite[Ch. 1, \S7]{ACG} for a definition and properties of relative Hilbert schemes). Locally, over an open subset $U$ of $\oP^1$, it is just $U\times (\cx^2)^{[d]}$. It is therefore  a $(2d+1)$-dimensional complex manifold with a holomorphic projection $p:Z_d\to \oP^1$. 
\par
$Z_d$ satisfies the necessary conditions to be the twistor space of a pseudo-hyperk\"ahler manifold (i.e.\ a  pseudo-Riemannian manifold $M$ with a fibrewise action of quaternions on $TM$, parallel with respect to the Levi-Civita connection):
\begin{itemize}
\item[(i)] it has an antiholomorphic involution $\sigma$ covering the antipodal map on $\oP^1$, induced from the standard antiholomorphic involution on the total space of $\sO_{\oP^1}(1)\oplus \sO_{\oP^1}(1)\simeq \oP^3\backslash \oP^1$;
\item[(ii)]it has an $\sO_{\oP^1}(2)$-valued symplectic form $\omega$ along the fibres of $\pi$, again induced\footnote{Recall \cite{Beau} that the Hilbert scheme of $d$ points on a symplectic surface has a canonical symplectic structure.} from the $\sO_{\oP^1}(2)$-valued fibrewise symplectic form on  the total space of $\sO_{\oP^1}(1)\oplus \sO_{\oP^1}(1)$.
\end{itemize}
The pseudo-hyperk\"ahler manifold is then the Kodaira moduli space of $\sigma$-invariant sections of $p$, the normal bundle 
 of which splits as $\sO_{\oP^1}(1)^{\oplus 2d}$ \cite[\S3(F)]{HKLR}. 

 \subsection{Normal sheaf of space curves}

 \begin{proposition} There exists a natural isomorphism between an open subset of the Hilbert scheme of $\oP^3\backslash \oP^1$ consisting of degree $d$  curves which are flat over $\oP^1$, and the Hilbert scheme of sections of  $p:Z_d\to \oP^1$.\label{bihomo}
 \end{proposition}
 \begin{remark} The Hilbert scheme of curves of degree $d$ is the union of all components of $\Hilb(\oP^3\backslash \oP^1)$ which have the Hilbert polynomial of the form $h(n)=dn+c$, $c\in \oZ$. The Hilbert scheme of sections of $p:Z_d\to \oP^1$ is an open subscheme of $\Hilb(Z_d)$.\end{remark}
 \begin{proof} If $C$ is a degree $d$  curve in $\oP^3 \backslash \oP^1$, flat over $\oP^1$, then its scheme-theoretic intersection with each fibre of $\oP^3\backslash \oP^1\to \oP^1$ yields a section of $Z_d$. The inverse map is defined as follows: given a section of $Z_d$,  pullback the universal family over the relative Hilbert scheme $Z_d$ to $\oP^1$. A flat family of curves $W$ in $\oP^3 \backslash \oP^1$ parameterised by a scheme $T$ is sent to a flat family of sections of $p$ (viewed as a flat family of $\oP^1$'s in $Z_d$).  Similarly, the inverse map sends a flat family of sections to a flat family of degree $d$ curves. The functorial interpretation of Hilbert schemes implies that both maps are  morphisms of schemes.
 \end{proof}
We can relate the normal bundle $N_{s/Z_d}$ of a section to the normal sheaf $\sN_{C/\oP^3}$ of the corresponding curve as follows:
\begin{lemma} $N_{s/Z_d}\simeq \pi_\ast \sN_{C/\oP^3}$.\label{pi*}\end{lemma}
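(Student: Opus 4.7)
The plan is to identify both sides as arising from the universal family on the relative Hilbert scheme $Z_d=\Hilb^d(Y/\oP^1)$, where I abbreviate $Y:=\oP^3\setminus\oP^1$. Since $C\cap\oP^1=\emptyset$ one has $\sN_{C/\oP^3}=\sN_{C/Y}$, so the goal reduces to producing a natural isomorphism $N_{s/Z_d}\simeq \pi_*\sN_{C/Y}$. Let $q:\sC\to Z_d$ denote the universal family (a closed subscheme of $Z_d\times_{\oP^1}Y$); base-changing along the section $s$ recovers $s^*\sC\simeq C$ sitting inside $s^*(Z_d\times_{\oP^1}Y)\simeq Y$, with $\pi:C\to\oP^1$ on the base.

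First I would reduce $N_{s/Z_d}$ to a vertical object. Pulling the relative tangent sequence
$$0\to T_{Z_d/\oP^1}\to TZ_d\to p^*T\oP^1\to 0$$
back along $s$ and using $p\circ s=\mathrm{id}_{\oP^1}$ gives a canonical splitting, whence $N_{s/Z_d}\simeq s^*T_{Z_d/\oP^1}$.

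Next I would invoke Grothendieck's description of the tangent sheaf of a relative Hilbert scheme: since $q:\sC\to Z_d$ is flat with $0$-dimensional fibres,
$$T_{Z_d/\oP^1}\simeq q_*\sN_{\sC/Z_d\times_{\oP^1}Y}.$$
At a point $[C_t]\in p^{-1}(t)$ this simply records that a tangent vector is a first-order embedded deformation of $C_t$ in $Y_t$, i.e.\ an element of $\Hom(\sI_{C_t/Y_t},\sO_{C_t})$. Finally I would apply flat base change along $s$: fibres of $q$ are $0$-dimensional so $R^1q_*=0$ on coherent sheaves, yielding $s^*q_*\simeq \pi_*(\,\cdot\,|_C)$; and flatness of $\sC$ over $Z_d$ makes the conormal sheaf compatible with base change, so $s^*\sN_{\sC/Z_d\times_{\oP^1}Y}\simeq \sN_{C/Y}$. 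Chaining the three isomorphisms gives the claim.

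The main technical point requiring care is the compatibility of the conormal sheaf with the base change along $s$. The identification $\sI_\sC/\sI_\sC^2\simeq \sI_\sC\otimes_{\sO}\sO_\sC$ is a non-flat tensor product, and one must use flatness of $\sC\to Z_d$ to kill the potential $\Tor_1$ obstruction, so that $s^*(\sI_\sC/\sI_\sC^2)\simeq \sI_C/\sI_C^2$ really holds. With that verified, the rest of the argument is formal.
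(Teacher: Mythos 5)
Your argument is correct and follows essentially the same route as the paper: both identify $N_{s/Z_d}$ with the restriction of the vertical tangent bundle of $Z_d\to\oP^1$, use the Hilbert-scheme description of that tangent bundle in terms of the normal sheaf of the universal family of $0$-dimensional subschemes, and use flatness of $C$ over $\oP^1$ to identify the ideal (hence the normal sheaf) of the fibre $C_t\subset \pi^{-1}(t)$ with the restriction of the ideal of $C$. The only organizational difference is that you base-change along the section $s$ and invoke the relative tangent-sheaf isomorphism $T_{Z_d/\oP^1}\simeq q_\ast\sN_{\sC}$ globally, whereas the paper argues fibre-by-fibre over $\oP^1$ and then uses Grauert's theorem to conclude that $\pi_\ast\sN_{C/\oP^3}$ is locally free, so that the pointwise identifications assemble into a bundle isomorphism.
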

\begin{proof} The normal bundle of a section is isomorphic to the restriction of the vertical tangent bundle $\Ker dp$ of $Z_d$ to the section. Since $Z_d$ is the fibrewise Hilbert scheme of points, the fibre of the vertical tangent bundle at $D\in Z_d$ is $H^0(D,\sN_{D/F})$, where $F\simeq \oC^2$ is the fibre containing $D$. On the other hand the ideal of $D$ in $F$ is just $J_C\otimes \sO_F$, where $J_C$ is the ideal of $C$ in $\oP^3$. Thus $\sN_{D/F}=\sN_{C/\oP^3}\otimes \sO_F$, and hence the function $\oP^1\ni z\to h^0(\pi^{-1}(z),\sN_{C/\oP^3}|_{\pi^{-1}(z)})$ is constant. It follows from a result of Grauert \cite[Cor.\ III.12.9]{Hart0} that $ \pi_\ast \sN_{C/\oP^3}$ is locally free. Therefore both $ \pi_\ast \sN_{C/\oP^3}$ and $N_{s/Z_d}$  are vector bundles and the natural map $ \pi_\ast \sN_{C/\oP^3}\to N_{s/Z_d}$ is an isomorphism on fibres.
\end{proof}
\begin{corollary} The normal sheaf of a Cohen-Macaulay curve in $\oP^3$ is torsion-free. \label{CMTF}
\end{corollary}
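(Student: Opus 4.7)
The plan is to reduce the statement to the setting of Lemma \ref{pi*} by a suitable choice of projection, and then to extract torsion-freeness of $\sN_{C/\oP^3}$ from local freeness of its pushforward via a formal sheaf-theoretic argument.

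First, given a Cohen-Macaulay curve $C\subset \oP^3$, I would choose a line $\ell\simeq\oP^1\subset \oP^3$ with $\ell\cap C=\emptyset$. Such a line exists because the lines through a fixed point form a $\oP^2$ inside $\Gr(2,4)$, so the lines meeting $C$ sweep out a subvariety of $\Gr(2,4)$ of dimension at most $3$, whereas $\dim \Gr(2,4)=4$. The projection $\pi:C\to \oP^1$ from $\ell$ is then finite of degree $\deg C$. Since $C$ is Cohen-Macaulay and $\oP^1$ is smooth of dimension one, miracle flatness (a finite morphism from a Cohen-Macaulay scheme to a regular curve with equidimensional fibres is flat) implies that $\pi$ is flat. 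Hence $C\subset \oP^3\setminus \ell$ fits into the setting of Proposition \ref{bihomo} and Lemma \ref{pi*}, which identify $\pi_\ast \sN_{C/\oP^3}$ with the normal bundle of the associated section of $Z_{\deg C}\to \oP^1$; in particular, $\pi_\ast \sN_{C/\oP^3}$ is locally free, hence torsion-free, on $\oP^1$.

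Now let $T\subset \sN_{C/\oP^3}$ denote the torsion subsheaf. Because $C$ has pure dimension one, $T$ is supported on a $0$-dimensional subscheme of $C$, so $\pi(\supp T)$ is also $0$-dimensional and $\pi_\ast T$ is a torsion sheaf on $\oP^1$. Since $\pi$ is finite, $\pi_\ast$ is exact, so the inclusion $T\hookrightarrow \sN_{C/\oP^3}$ yields an inclusion $\pi_\ast T\hookrightarrow \pi_\ast \sN_{C/\oP^3}$ into a torsion-free sheaf. A sheaf that is both torsion and torsion-free is zero, so $\pi_\ast T=0$, and the equality $\supp\pi_\ast T=\pi(\supp T)$ for a finite morphism then forces $T=0$.

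The main subtlety lies in the first step: it is exactly the flatness of $\pi$, via miracle flatness, where the Cohen-Macaulay hypothesis on $C$ enters. Everything afterwards is formal, and the non-planarity assumption appearing in Proposition \ref{bihomo} plays no role here, since the proof of Lemma \ref{pi*} relies only on flatness of $\pi$ together with the constancy of $h^0\bigl(\pi^{-1}(z),\sN_{C/\oP^3}|_{\pi^{-1}(z)}\bigr)=2\deg C$ and Grauert's theorem.
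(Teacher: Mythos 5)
Your argument is correct and is essentially the proof the paper intends: the corollary is left as an immediate consequence of Lemma \ref{pi*}, and you have simply filled in the routine steps (existence of a line disjoint from $C$, flatness of the resulting finite projection via miracle flatness — which is exactly where the Cohen--Macaulay hypothesis enters — and the deduction of torsion-freeness of $\sN_{C/\oP^3}$ from local freeness of $\pi_\ast \sN_{C/\oP^3}$ using exactness and faithfulness of $\pi_\ast$ for a finite morphism). You are also right that the non-planarity assumption appearing in Proposition \ref{bihomo} plays no role in this deduction.
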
 

\vspace{-6mm}

\begin{proof} We can find a projective line $l\subset \oP^3$ such that the projection $\pi:C\to l$ is flat. Were $\sN_{C/\oP^3}$ not torsion free, neither would be  $ \pi_\ast \sN_{C/\oP^3}$, contradicting the above lemma.
\end{proof}

Lemma \ref{pi*} implies that the normal bundle of a section splits as $\sO_{\oP^1}(1)^{\oplus 2d}$ if and only if $H^\ast(C, \sN_{C/\oP^3}(-2))=0$. As mentioned at the beginning of the section, the parameter space of curves satisfying this condition has a natural complexified pseudo-hyperk\"ahler structure and on its $\sigma$-invariant part a genuine pseudo-hyperk\"ahler structure \cite{sigma}. We now want to investigate this parameter space via commuting matrix polynomials.
\par
If $E$ is a rank $d$ vector bundle on $\oP^1$, we denote by $\sK(E)$ the subsheaf of  $\bigl(\sEnd(E)\otimes \sO_{\oP^1}(1)\bigr)^{\oplus 2}$ defined by
$$ \sK(E)(U)=\bigl\{(A(t),B(t))\in \bigl(\sEnd(E)\otimes \sO_{\oP^1}(1)\bigr)^{\oplus 2}(U);\, \forall_{t\in U}\enskip (A(t),B(t))\in M_d^{\rm reg} \bigr\}.$$
We then denote by $K(E)$ the total space of $\sK(E)$, i.e.\ the subset of the total space of $\bigl(\sEnd(E)\otimes \sO_{\oP^1}(1)\bigr)^{\oplus 2}$ consisting of points through which passes a local section of $\sK(E)$.
Since $Z_d$ is the relative Hilbert scheme for the projection $\sO_{\oP^1}(1)\oplus \sO_{\oP^1}(1)\to \oP^1$,
 Proposition \ref{sch-iso} implies that $Z_d$ is biholomorphic to the quotient of $K\bigl(\sO_{\oP^1}^{\oplus d}\bigr)$
by the fibrewise action of $GL(d,\cx)$. In fact we have:
\begin{lemma} Let $E$ be a vector bundle of rank $d$ on $\oP^1$. Then the quotient of $K(E)$
by the fibrewise action of $GL(d,\cx)$ is biholomorphic to $Z_d$. \label{qKE}
\end{lemma}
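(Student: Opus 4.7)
The plan is to reduce the general case to the already-established case $E=\sO^{\oplus d}$ by working locally on $\oP^1$ and then gluing. The key observation is that the condition for a pair $(A(t),B(t))$ to define a point of $K(E)$ is fibrewise (at each $t$ the pair must lie in $M_d^{\rm reg}$) and is invariant under any change of local frame of $E$, so the fibrewise quotient by $GL(d,\cx)$ does not see the twisting of $E$.

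Concretely, I would cover $\oP^1$ by open sets $U_\alpha$ over which $E$ admits a frame $\psi_\alpha\colon E|_{U_\alpha}\xrightarrow{\sim}\sO^{\oplus d}|_{U_\alpha}$. Conjugation by $\psi_\alpha$ yields an isomorphism of sheaves $\sEnd(E)|_{U_\alpha}\cong \sEnd(\sO^{\oplus d})|_{U_\alpha}$, which restricts to $\sK(E)|_{U_\alpha}\cong \sK(\sO^{\oplus d})|_{U_\alpha}$ since membership in $M_d^{\rm reg}$ is invariant under simultaneous conjugation. This isomorphism intertwines the fibrewise $GL(d,\cx)$-actions. Combining with the identification $K(\sO^{\oplus d})/GL(d,\cx)\cong Z_d$ from the paragraph preceding the lemma, I obtain a local biholomorphism $K(E)|_{U_\alpha}/GL(d,\cx)\xrightarrow{\sim}Z_d|_{U_\alpha}$.

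To glue these local biholomorphisms, I would observe that two frames $\psi_\alpha,\psi_\beta$ differ on $U_\alpha\cap U_\beta$ by a transition function $g_{\alpha\beta}\colon U_\alpha\cap U_\beta\to GL(d,\cx)$, so that the two resulting images of $(A(t),B(t))$ in $K(\sO^{\oplus d})|_{U_\alpha\cap U_\beta}$ differ by fibrewise conjugation by $g_{\alpha\beta}(t)$. Since we are quotienting by the full fibrewise $GL(d,\cx)$-action, this conjugation is absorbed, and the two descents to $Z_d|_{U_\alpha\cap U_\beta}$ coincide. Hence the local biholomorphisms glue to a global biholomorphism $K(E)/GL(d,\cx)\cong Z_d$.

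The main (mild) obstacle is making precise what ``the quotient of $K(E)$ by the fibrewise action of $GL(d,\cx)$'' means globally when $E$ is nontrivial. The cleanest interpretation is as the fibrewise action of the automorphism group scheme $\underline{\Aut}(E)\to \oP^1$, whose local trivializations are exactly the frames $\psi_\alpha$ above; with this framing the gluing step becomes essentially tautological, and the resulting complex-manifold structure on the quotient is inherited from the $E=\sO^{\oplus d}$ case via the local isomorphisms.
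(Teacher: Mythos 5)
Your argument is correct and is essentially the paper's proof: both rest on the observation that the twisting of $E$ enters $\sK(E)$ only through fibrewise conjugation by transition functions of $\sEnd(E)$, which is absorbed by the fibrewise $GL(d,\cx)$-quotient, reducing everything to the trivial-bundle case already identified with $Z_d$. The paper phrases this concretely via the Birkhoff--Grothendieck splitting $E\simeq\bigoplus\sO(\lambda_i)$ and the standard two-chart cover of $\oP^1$ with transition function $t^{\lambda}(\cdot)t^{-\lambda}$, whereas you use an arbitrary trivializing cover and glue; the mechanism is the same.
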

\begin{proof} Let $E\simeq \bigoplus\sO_{\oP^1}(\lambda_i)$. Then $\sEnd(E)\otimes \sO_{\oP^1}(1)$ is isomorphic to two copies $\cx\times \Mat_{d\times d}(\cx)$ glued over $t\neq0,\infty$ by $(\tilde t,\tilde A)=(1/t, t^{\lambda}At^{-\lambda})$, where $\lambda=\diag(\lambda_1,\dots,\lambda_d)$. Thus, after  taking the  fibrewise quotient by $GL(d,\cx)$, we obtain the same complex manifold, independently of the choice of $E$.
\end{proof}

From now on $d\geq 3$.
Let $C$ be a connected nonplanar curve of degree $d$ in $\oP^3\backslash \oP^1$,  flat over $\oP^1$. If $C$ satisfies $\pi_\ast \sO_C\simeq E$, then the corresponding section of $p:Z_d\to \oP^1$ arises as the projection of a global section of $\sK(E)\to\oP^1$, i.e. it can be represented by a pair of commuting matrix polynomials, the degrees of which satisfy the constraints of part B in Proposition \ref{correspondence}.  We should like to remark that this gives constraints for possible degrees of commuting pairs of polynomials $A(t),B(t)$, such that $\dim Z(A(t),B(t))=d$ for each $t$.

Let $\sZ$ denote the bundle of centralisers of $(A,B)$, i.e. a subbundle of $\sEnd(E)$, the fibre of which over each $t$ is spanned by $Z(A(t),B(t))$. Let $\sT$ denote the kernel of the homomorphism 
\begin{equation} D:\bigl(\sEnd(E)\otimes \sO_{\oP^1}(1)\bigr)^{\oplus 2}\to \sEnd(E)\otimes \sO_{\oP^1}(2), \quad (a,b)\to  [A,b ]+[a,B].\label{sT}\end{equation}
In other words $\sT$ is the fibrewise tangent bundle to $K(E)$. We obtain a short exact sequence of locally free sheaves on $\oP^1$:
\begin{equation} 0\to\sEnd(E)/\sZ\to \sT\to N\to 0,\label{TtoN}
\end{equation}
where $N$ is the normal bundle of the section in $Z_d$ and the first map is given by $\rho\mapsto \bigl([\rho,A],[\rho,B]\bigr)$, i.e.\ its image consists of fundamental vector fields for the action of $GL(d,\cx)$.
\begin{lemma} $\sZ\simeq E$ and if we write $E\simeq \sO_{\oP^1}\oplus E^\prime$, then the embedding $\sZ\hookrightarrow \sEnd(E)\simeq \sO_{\oP^1}\oplus E^\prime\oplus(E^\prime)^\ast\oplus \sEnd(E^\prime)$ is the isomorphism with the direct sum of the first two summands.
\end{lemma}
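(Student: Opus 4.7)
The plan is to exhibit a canonical global section of $E$ coming from the trivial summand $\sO$ and to identify $\sZ$ with $\sO\oplus E^\prime$ via evaluation at that section. The distinguished summand $\sO\subset E$ yields a canonical section $\epsilon_1\in H^0(\oP^1,E)$, represented in the trivialisation of Proposition \ref{correspondence}(B) by the constant vector $e_1\in\cx^d$. Condition (i) of that proposition gives $A(t)e_1=e_2$ and $B(t)e_1=e_3$, and condition (iii)---that $\cx[A(t),B(t)]$ is conjugate to its image under the regular representation---guarantees that $e_1$ is cyclic for this algebra at every $t$, since the cyclic vector $1$ of the regular representation corresponds, in a basis of the form $\{1,A,B,\dots\}$ of $\cx[A(t),B(t)]$, precisely to $e_1$.

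The core step is to consider the sheaf morphism $\sZ\to E$ defined by $\phi\mapsto \phi\cdot\epsilon_1$. Since $(A(t),B(t))\in M_d^{\rm reg}$ gives $\dim\cx[A(t),B(t)]=d$, Neubauer--Saltman yields $\dim Z(A(t),B(t))=d$, and as $\cx[A(t),B(t)]\subseteq Z(A(t),B(t))$ these coincide, so the fibre of $\sZ$ at $t$ equals $\cx[A(t),B(t)]$. Cyclicity of $e_1$ makes the fibrewise evaluation map $\cx[A(t),B(t)]\to\cx^d$ surjective, and a dimension count shows it is bijective. Thus $\sZ\to E$ is a fibrewise isomorphism of locally free sheaves of rank $d$, hence an isomorphism, yielding $\sZ\simeq E\simeq \sO\oplus E^\prime$.

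For the second claim I would unpack the decomposition $\sEnd(E)=E\otimes E^\ast=(\sO\oplus E^\prime)\otimes(\sO\oplus (E^\prime)^\ast)$: the first two summands $\sO\oplus E^\prime$ coincide with $E\otimes\sO^\ast$, i.e.\ with the endomorphisms vanishing on $E^\prime$, and via the canonical trivialisation of $\sO^\ast$ dual to $\epsilon_1$ the projection $\sEnd(E)\to \sO\oplus E^\prime$ along the two complementary summands is identified with the evaluation map $\phi\mapsto \phi(\epsilon_1)$. Restricting this projection to $\sZ$ recovers the isomorphism of the previous paragraph, which is the asserted identification. The main obstacle I anticipate is justifying cyclicity of the specific vector $e_1$ rather than some abstract cyclic vector; this requires tracing the basis determined by the fixed splitting $\pi_\ast\sO_C\simeq \sO\oplus E^\prime$ together with the identification of $1\in\sO(C_t)$ with $e_1$.
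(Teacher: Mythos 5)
Your proposal is correct and follows essentially the same route as the paper: both identify $\sZ$ with $E$ via the ``first column'' map, i.e.\ evaluation of a centralising endomorphism on the canonical section $e_1$ of the trivial summand, using Neubauer--Saltman to get $\dim Z(A(t),B(t))=d=\dim\cx[A(t),B(t)]$ fibrewise. The only cosmetic difference is that the paper exhibits the inverse map explicitly (each summand $\sO(\lambda_i)$ of $E'$ gives a commuting section $A_i(t)$ with first column $e_i$, and these together with $1$ span $\sZ$ fibrewise), whereas you prove the forward evaluation map is a fibrewise isomorphism via cyclicity of $e_1$ --- two equivalent phrasings of the same argument.
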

\begin{proof} Let $C$ be the curve in $\oP^3\backslash \oP^1$ determined by $A(t),B(t)$. We know that $\pi_\ast\sO_C\simeq E$. Let $E^\prime \simeq \bigoplus_{i=2}^d \sO_{\oP^1}(\lambda_i)$. Each summand defines a section $A_i(t)$ of $\sEnd(E)\otimes\sO_{\oP^1}(-\lambda_i)$ and these matrix polynomials commute. Moreover the first column of each $A_i(t)$ is just $e_i$. The bundle $\sZ$ is spanned over each $t$ by $1$ and the $A_i(t)$, i.e. it is isomorphic to the first column of $\sEnd(E)$. 
The claim follows.
\end{proof}

The image sheaf of $D$ is also locally free and can be identified with the annihilator of $\sZ$ in $\sEnd(E)\otimes \sO_{\oP^1}(2)$ (as vector bundles):
$$ \rho \in\im D \iff \tr\rho z=0\enskip \forall z\in \sZ.$$ 
Let us write $\sL=(E^\prime)^\ast\oplus \sEnd(E^\prime)\simeq \sEnd(E)/\sZ$. As a matrix of endomorphisms of $E$, it has the first column equal to $0$. From the above characterisation of $\im D$ it follows that $\im D\simeq \sL^\ast(2)$, and, consequently, we can write the two short exact sequences (i.e. \eqref{TtoN} and the sequence induced by \eqref{sT}) as:
\begin{equation} 0\to \sL\to \sT\to N\to 0,\label{KTN}\end{equation}
\begin{equation} 0\to \sT\to \bigl(\sEnd(E)\otimes \sO_{\oP^1}(1)\bigr)^{\oplus 2}\to \sL^\ast(2)\to 0.\label{K(2)}\end{equation}
 These sequence describe $Z_d$, at least infinitesimally, as a fibrewise symplectic quotient of the total space of $\bigl(\sEnd(E)\otimes \sO_{\oP^1}(1)\bigr)^{\oplus 2}$ by a Lie group, the Lie algebra of which is $\sL$. Our idea, on how to study the hyperk\"ahler structure of the Hilbert scheme of curves in $\oP^3\backslash\oP^1$, is essentially to describe the hyperk\"ahler manifold of sections of $Z_d$ as a ``hyperk\"ahler quotient" of the (open dense subset of) vector space $H^0\bigl(\oP^1,\bigl(\sEnd(E)\otimes \sO_{\oP^1}(1)\bigr)^{\oplus 2}\bigr)$ by a Lie group, the Lie algebra of which is $H^0(\oP^1,\sL)$. This cannot quite work in this form for various reasons, the most obvious of which is that the sections of $\bigl(\sEnd(E)\otimes \sO_{\oP^1}(1)\bigr)^{\oplus 2}$ have the wrong normal bundle in order to form a hyperk\"ahler manifold. In \S\ref{rational} we shall show how to modify and  implement this idea for  curves of genus $0$. For higher genera, one probably has to work with the $\sP$-structures of Gindikin \cite{Gi}, and with their quotients.

\medskip

\begin{remark} The above sequences provide numerical restrictions on curves which are cohomologically stable. Indeed, 
tensor \eqref{KTN} and \eqref{K(2)} with $\sO(-2)$ and put:
$$ a=h^0((\sL(-2))=h^1(\sL^\ast),\enskip b=h^1((\sL(-2))=h^0(\sL^\ast),$$
$$c=h^0(\sEnd(E)(-1))=h^1(\sEnd(E)(-1)).$$
Then $h^0(\sT(-2))\geq 2c-b$ and, consequently, if  $C$ is cohomologically stable, i.e. $h^0(N(-2))=h^1(N(-2))=0$, then $a\geq 2c-b$.
Let
$$\pi_\ast\sO_C\simeq \sO_{\oP^1}\oplus\bigoplus_{i=1}^s\sO_{\oP^1}(-i)^{m_i}.$$
Then $a=\sum_{j\geq i+2} (j-i-1)m_im_j+\sum_{i\geq 2}(i-1)m_i$, $b=\sum_{j\geq i} (j-i+1)m_im_j$, $c=\sum_{j\geq i+1} (j-i)m_im_j+\sum_{i} im_i$. Thus
$$ 0\geq 2c-a-b=\sum_i(i+1)m_i-\sum_{i}m_i^2.$$
For example a curve such that $m_i\leq i+1$ for all $i$ and $m_i\leq i$ for at least one $i$ cannot be cohomologically stable (cf. Example \ref{112}). 
\end{remark} 


\subsection{Twistor space revisited\label{revisit}}
Given the constraints of Proposition \ref{correspondence} on the matrices $A,B$ representing a space curve, we can  replace $Z_d$ for $d\geq 3$ by its open dense subset $Z^\prime_d$, the fibres of which consist of $0$-dimensional subschemes of $\cx^2$ not contained in any line. We have an analogue of Proposition \ref{bihomo}, the proof of which is the same:
\begin{proposition} Let $d\geq 3$. There exists a natural isomorphism between an open subset of the Hilbert scheme of $\oP^3\backslash \oP^1$ consisting of degree $d$   curves which are flat over $\oP^1$, and the Hilbert scheme of sections of  $p:Z_d^\prime\to \oP^1$.\hfill $\Box$\label{bihom}
 \end{proposition}
Theorem \ref{nonlin} tells us how to construct $Z^\prime_d$ from a vector bundle on $\oP^1$.
Let $E^\prime$ be a vector bundle of rank $d-1$ over $\oP^1$ of the form $E^\prime\simeq \bigoplus_{i=1}^{d-1}\sO_{\oP^1}(-k_i)$ with $k_1=k_2=1$ and all $k_i$ positive.
Define $\sK^0(E^\prime)$ to be the subsheaf of $\bigl(\sEnd(E^\prime)\otimes \sO_{\oP^1}(1)\bigr)^{\oplus 2}$, the local section of which are pairs $(X(t),Y(t))$ such that for all $t$:
\begin{itemize}
\item[(i)] the first column of $Y(t)$ is equal to the second column of $X(t)$;
\item[(ii)] there is no subspace $S$ of $\cx^{d-1}$ such that $e_1,e_2\in S$, $X(t)S\subset S$, $Y(t)S\subset S$;
\item[(iii)]  $[X(t),Y(t)]$ has nonzero entries only in the first two rows.
\end{itemize}
We denote by $K^0(E)$ the total space of $\sK^0(E)$ (defined as for $K(E)$).
We define $Z(E^\prime)$ to be the fibrewise quotient of $K^0(E^\prime)$ by the action \eqref{action} of the group $\hat G^0$ defined in \S\ref{notlinear}. Theorem \ref{nonlin} implies that $Z(E^\prime)\simeq Z^\prime_d$ and that is a smooth manifold fibreing over $\oP^1$  and we can conclude:
\begin{proposition} $Z(E^\prime)\simeq Z_d^\prime$. \label{Z(E)}
\end{proposition}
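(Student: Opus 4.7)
The plan is to identify $K^0(E^\prime)$ with a $\hat G^0$-invariant locally closed subvariety of $K(E)$, where $E = \sO \oplus E^\prime$, and then descend using Lemma \ref{qKE}.

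Concretely, I would define a holomorphic embedding $\Phi\colon K^0(E^\prime) \hookrightarrow K(E)$ as follows. Given a local section $(X(t), Y(t))$ of $\sK^0(E^\prime)$, set $i_1 = e_1$, $i_2 = e_2$, and let $j_1(t), j_2(t)$ be the negatives of the first and second rows of $[X(t), Y(t)]$, which by condition (iii) are the only potentially nonzero rows of the commutator. Form $(\hat X, \hat Y)$ via \eqref{hatXY}. A direct degree count, using $k_1 = k_2 = 1$ and the block decomposition $E = \sO \oplus E^\prime$, shows that $(\hat X, \hat Y)$ is a local section of $\bigl(\sEnd(E) \otimes \sO(1)\bigr)^{\oplus 2}$. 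Conditions (i) and (iii) together with the choice of $j_1, j_2$ force the bottom $d-1$ rows of $[\hat X(t), \hat Y(t)]$ to vanish at every $t$; Lemma \ref{equiv} converts condition (ii) into the cyclicity of $e_1$ for $(\hat X(t), \hat Y(t))$; and the cyclic-vector argument of Theorem \ref{G0} then forces $[\hat X(t), \hat Y(t)] = 0$. Thus $(\hat X, \hat Y)$ is a section of $\sK(E)$, and since the corresponding fibrewise $0$-dimensional subschemes of $\cx^2$ are not contained in any line, composition with the quotient of Lemma \ref{qKE} lands in $Z_d^\prime \subset Z_d$.

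The action formula \eqref{action} was designed precisely so that $\Phi$ is $\hat G^0$-equivariant when $\hat G^0$ is realised as the subgroup of $GL(d, \cx)$ stabilising the first three columns $e_1, e_2, e_3$. Therefore $\Phi$ descends to a holomorphic map $\bar\Phi\colon Z(E^\prime) \to Z_d^\prime$, and over each $t \in \oP^1$ its fibre restriction is exactly the biholomorphism of Theorem \ref{nonlin} between $K^0(E^\prime)_t/\hat G^0$ and the non-collinear locus in $(\cx^2)^{[d]}$. Hence $\bar\Phi$ is bijective, and it is a biholomorphism provided $Z(E^\prime)$ is a complex manifold. For the latter, freeness of the $\hat G^0$-action on $K^0(E^\prime)$ is pointwise and follows from Theorem \ref{nonlin}, while properness is inherited from the properness of the $GL(d, \cx)$-action on $K(E)$, itself a consequence of the argument in Theorem \ref{G0} transferred to arbitrary $E$ via the gluing in the proof of Lemma \ref{qKE}.

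The main obstacle is mostly bookkeeping: verifying that the $\sO_{\oP^1}$-degrees of the entries of $(\hat X, \hat Y)$ match the $\sEnd(E) \otimes \sO(1)$ structure, and that the fibrewise principal-bundle property of $K^0(E^\prime) \to Z(E^\prime)$ genuinely globalises. Everything else is a functorial repackaging of the constructions in Theorem \ref{nonlin} and Lemma \ref{qKE}.
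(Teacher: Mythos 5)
Your proposal is correct and takes essentially the same route as the paper: the paper's one-line proof invokes the fibrewise identification of Theorem \ref{nonlin} together with the trivialisation-and-gluing mechanism of Lemma \ref{qKE}, which is exactly what your explicit $\hat G^0$-equivariant embedding of $K^0(E^\prime)$ into $K(\sO\oplus E^\prime)$ via \eqref{hatXY} unpacks in detail. The verification steps you list (degree bookkeeping, vanishing of the commutator via Lemma \ref{equiv} and the cyclic-vector argument of Theorem \ref{G0}, and free and proper descent) are all sound.
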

\begin{proof} This is the same argument as in the proof of Lemma \ref{qKE}.
\end{proof}
Consequently, sections of $Z^\prime_d$ corresponding to curves with $\pi_\ast \sO_C\simeq \sO_{\oP^1}\oplus E^\prime$ arise as projections of global sections of $K(E^\prime)$, and we can represent them by pairs $(X(t),Y(t))$ of matrix polynomials satisfying the above constraints.

\section{Interlude: complexified hyperk\"ahler structures\label{interlude}}

The complexified algebra of quaternions is isomorphic to $\Mat_{2,2}(\cx)$. Consequently, a complexification of a real manifold with a geometry based on quaternions (such as hyperk\"ahler, hypercomplex, quaternionic) or on split quaternions (hypersymplectic geometry) will posses a geometry based on this algebra (called {\em algebra of biquaternions} by Hamilton). Such geometries have been considered in the past, in particular by Jardim and Verbitsky in \cite{JV}. They call a complexified hyperk\"ahler structure a {\em trisymplectic structure generating an $SL(2,\cx)$-web}. Although there are good reasons for this terminology, we prefer the shorter name {\em $\cx$-hyperk\"ahler}. 
\begin{definition} A complex manifold $M$ is called {\em almost $\cx$-hypercomplex} if its holomorphic tangent bundle $TM$ decomposes as $TM\simeq E\otimes \cx^2$, where $E$ is a holomorphic vector bundle. It is $\cx$-hypercomplex if, in addition, for any $v\in \cx^2$ the subbundle $E\otimes v$ defines an integrable distribution on $M$.
\end{definition}
\begin{definition} A $\cx$-hypercomplex manifold is called {\em $\cx$-hyperk\"ahler} if it is equipped with a holomorphic section $g$ of $S^2T^\ast M$, such that:
\begin{itemize}
\item[(i)] at any $m\in M$, $g_m$ is nondegenerate,
\item[(ii)]for any $A\in \Mat_{2,2}(\cx)$, $g(A\cdot,\cdot)=g(\cdot,A_{\rm adj}\cdot)$, and
\item[(iii)] for any $A\in \ssl(2,\cx)$, the holomorphic $2$-form $\omega_A=g(A\cdot,\cdot)$ is closed.
\end{itemize}
\end{definition}
Given a $\cx$-hypercomplex manifold we can define an integrable distribution $\sD$ on $M\times \oP^1$ by $\sD_{|M\times\{z\}}=E\otimes h$, where $h$ is the highest weight vector for the maximal torus in $SL(2,\cx)$ corresponding to $z\in \oP^1$. If $\sD$ is simple\footnote{We call a distribution $\sD$ on a manifold $Y$ {\em simple}, if the leaf space $X$ of the corresponding foliation is a manifold and the map $Y\to X$ is a submersion.}, then the leaf space $Z$ is the twistor space of $M$, and points of $M$ correspond to sections of $Z\to \oP^1$ with normal bundles spliting as $\bigoplus \sO_{\oP^1}(1)$. If the distribution $\sD$ is not simple, we need to view $Z$ in terms of foliated geometry.
\par
If $M$ is $\cx$-hyperk\"ahler, then its twistor space is equipped with a fibrewise $\sO_{\oP^1}(2)$-valued complex symplectic form.
\par
We shall now discuss $\cx$-hypercomplex and $\cx$-hyperk\"ahler quotients. In the case of a reductive Lie group, $\cx$-hyperk\"ahler quotients have been introduced and studied by Jardim and Verbitsky \cite{JV} under the name ``trisymplectic reduction". For us the case of nonreductive groups will be of paramount importance.
\begin{definition} Let $M$ be a $\cx$-hypercomplex manifold equipped with a holomorphic action of a complex Lie group $G$ preserving the $\cx$-hypercomplex structure. A {\em $\cx$-hypercomplex moment map} is a $G$-equivariant holomorphic map $\mu:M\to \g^\ast\otimes \ssl(2,\cx)^\ast$, such that there exists a holomorphic $\g^\ast$-valued  $1$-form $\phi$ with the property that $\Phi=\phi \cdot 1_{2\times2}+d\mu:TM\to \g^\ast\otimes \Mat_{2,2}(\cx)^\ast$ satisfies $\langle \Phi(Av),B\rangle=\langle \Phi(v),A_{\rm adj}B\rangle$ for any $A\in \Mat_{2,2}(\cx)$.
\end{definition}
\begin{definition} Let $M$ be a $\cx$-hyperk\"ahler manifold equipped with a holomorphic action of a complex Lie group $G$ preserving the $\cx$-hyperk\"ahler structure. A {\em $\cx$-hyperk\"ahler moment map} is a $G$-equivariant holomorphic map $\mu:M\to \g^\ast\otimes \ssl(2,\cx)^\ast$, such that, for any $A\in \ssl(2,\cx)$ and any fundamental vector field $X_\rho$, $\rho \in \g$, $\langle d\mu(\cdot),\rho\otimes A\rangle=\omega_A(X_\rho, \cdot)$.
\end{definition}
\begin{remark} A $\cx$-hyperk\"ahler moment map is also a $\cx$-hypercomplex moment map, with $\langle \phi(v),\rho\rangle=g(X_\rho,v)$, $\rho \in \g$, $v\in TM$.
\end{remark}
The $\cx$-hypercomplex or $\cx$-hyperk\"ahler reduction proceeds now along the usual lines: given $G$ and a moment map $\mu:M\to \g^\ast\otimes \ssl(2,\cx)^\ast$, choose a $G$-invariant element $c\in \g^\ast\otimes \ssl(2,\cx)^\ast$. Unlike in the hyperk\"ahler case, the freeness of the action of $G$ on $\mu^{-1}(c)$ does not imply that $\mu^{-1}(c)$ is smooth. Moreover, even if $\mu^{-1}(c)$ is smooth and the action of $G$ on $\mu^{-1}(c)$ is free and proper, then although $ \mu^{-1}(c)/G$ is a complex manifold \cite[Ch.III, Prop.10]{Bour}, it is not necessarily a $\cx$-hypercomplex or $\cx$-hyperk\"ahler manifold. As observed by several authors in related settings  (especially \cite[\S 3]{Joyce}, \cite{H}, \cite[\S 4]{DS}), both smoothness of  $\mu^{-1}(c)$ and the existence of induced geometry on $ \mu^{-1}(c)/G$ are guaranteed by a single nondegeneracy condition. Namely, we have:
\begin{theorem} Let $\mu:M\to \g^\ast\otimes \ssl(2,\cx)^\ast$ be a $\cx$-hypercomplex (resp. $\cx$-hyperk\"ahler) moment map and let $c\in \g^\ast\otimes \ssl(2,\cx)^\ast$ be $G$-invariant. Suppose that the action of $G$ on $\mu^{-1}(c)$ is free and proper and that, at any $m\in \mu^{-1}(c)$, $\phi_m(X_\rho)\neq 0$ for any $\rho\in \g$ (resp. the restriction of $g$ to the subspace of $T_mM$ generated by fundamental vector fields is nondegenerate). Then $ \mu^{-1}(c)/G$ is a $\cx$-hypercomplex (resp. $\cx$-hyperk\"ahler) manifold.\hfill $\Box$\label{nondegen}
\end{theorem}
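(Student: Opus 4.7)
The plan is to mimic the Hitchin--Karlhede--Lindstr\"om--Ro\v cek hyperk\"ahler quotient construction, replacing the real quaternion algebra by $\Mat_{2,2}(\cx)$ and running the $\cx$-hypercomplex and $\cx$-hypersymplectic cases in parallel. The nondegeneracy hypothesis enters only twice: to verify smoothness of $\mu^{-1}(c)$ and to produce a canonical $\Mat_{2,2}(\cx)$-invariant complement to $T_m(G\cdot m)$ inside $\ker d\mu_m$.

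First I would verify that $d\mu_m$ is surjective at every $m\in\mu^{-1}(c)$, making $\mu^{-1}(c)$ a smooth submanifold of $M$ of codimension $3\dim G$. In the $\cx$-hypersymplectic case $g$ identifies the transpose $d\mu_m^\ast:\g\otimes\ssl(2,\cx)\to T_m^\ast M$ with $\rho\otimes A\mapsto g(AX_\rho,\cdot)$; its injectivity reduces to injectivity of $\rho\otimes A\mapsto AX_\rho$, which follows from freeness of the $G$-action together with the nondegeneracy of the restriction of $g$ to $T_m(G\cdot m)$, after using property (ii) to compute $g(AX_\rho,BX_\sigma)$. The $\cx$-hypercomplex case is analogous with $\phi$ playing the role of $g$. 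Freeness and properness then produce the complex manifold $Q:=\mu^{-1}(c)/G$ via the cited Bourbaki reference.

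To install the quaternionic data on $Q$, set $\mathfrak{M}_m:=\Mat_{2,2}(\cx)\cdot T_m(G\cdot m)\subset T_mM$. The covariance of $\Phi$, tested against $1\in\Mat_{2,2}(\cx)$ at $v=X_\rho$, forces $\ssl(2,\cx)\cdot T_m(G\cdot m)$ to be $g$-orthogonal to $T_m(G\cdot m)$ and to correspond under $g$ to the annihilator of $\ker d\mu_m$. Consequently $\mathfrak{M}_m^{\perp_g}\subset\ker d\mu_m$, and nondegeneracy of $g|_{T_m(G\cdot m)}$ together with the codimension count of the previous step makes $\mathfrak{M}_m^{\perp_g}$ a complement of $T_m(G\cdot m)$ there. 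A one-line application of (ii) shows $\mathfrak{M}_m^{\perp_g}$ is $\Mat_{2,2}(\cx)$-invariant: for $w\in\mathfrak{M}_m^{\perp_g}$ and $A,B\in\Mat_{2,2}(\cx)$, $u\in T_m(G\cdot m)$, one has $g(Aw,Bu)=g(w,(A_{\rm adj}B)u)=0$. Since $\cx^2$ is an irreducible $\Mat_{2,2}(\cx)$-module, any $\Mat_{2,2}(\cx)$-invariant subspace of $E_m\otimes\cx^2$ has the form $E'_m\otimes\cx^2$; gluing this over $Q$ produces a holomorphic bundle $E_Q$ with $TQ\simeq E_Q\otimes\cx^2$. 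The restrictions of $g$ (resp.\ $\phi$) to $\mathfrak{M}_m^{\perp_g}$ are $G$-invariant and descend to $Q$; nondegeneracy of $g_Q$ follows from the splitting $T_mM=\mathfrak{M}_m\oplus\mathfrak{M}_m^{\perp_g}$. Closedness of $\omega_A^Q$ for $A\in\ssl(2,\cx)$ is the standard Cartan computation: $\iota_{X_\rho}\omega_A=d\langle\mu,\rho\otimes A\rangle$ vanishes on $T\mu^{-1}(c)$, so $\omega_A|_{\mu^{-1}(c)}=q^\ast\omega_A^Q$ is basic for $q:\mu^{-1}(c)\to Q$, and $d\omega_A^Q=0$ follows from $d\omega_A=0$.

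The main obstacle is integrability of the distributions $E_Q\otimes v$, $v\in\cx^2$, on $Q$. I would handle this by lifting: the $q$-pullback of $E_Q\otimes v$ to $\mu^{-1}(c)$ is $\sD_v:=\bigl((E\otimes v)\cap\ker d\mu\bigr)+T(G\cdot m)$. The intersection $(E\otimes v)\cap\ker d\mu$ is involutive on $\mu^{-1}(c)$ by a direct Frobenius check: extend sections $X,Y$ to local sections of $E\otimes v$ on $M$; integrability of $E\otimes v$ keeps $[X,Y]$ in $E\otimes v$, while $X,Y\in\ker d\mu$ together with $d\mu\equiv c$ on $\mu^{-1}(c)$ forces $d\mu([X,Y])=0$ there. $G$-invariance of $(E\otimes v)\cap\ker d\mu$ makes its bracket with $T(G\cdot m)$ stay inside, and $T(G\cdot m)$ is itself involutive, so $\sD_v$ is involutive; Frobenius on the base then gives integrability of $E_Q\otimes v$ on $Q$, completing the verification that $Q$ carries the announced $\cx$-hypercomplex (resp.\ $\cx$-hypersymplectic) structure.
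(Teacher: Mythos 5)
The paper offers no proof of Theorem \ref{nondegen}: it is asserted with a box and a pointer to Joyce, Hitchin and Dancer--Swann, who establish the analogous statements for the real hypercomplex and hypersymplectic quotients. Your argument is a correct transcription of that construction to the $\Mat_{2,2}(\cx)$ setting, and the two facts driving it --- that $\ssl(2,\cx)\check\g$ is $g$-orthogonal to $\check\g$ along $\mu^{-1}(c)$ (because $c$ is $G$-invariant, so $\mu$ is constant on orbits there), and that $g$ restricted to $\mathfrak{M}_m=\Mat_{2,2}(\cx)\cdot\check\g$ is consequently block-diagonal with four nondegenerate copies of $g|_{\check\g}$ --- are exactly the ones the paper itself deploys afterwards, in the proof of Proposition \ref{moregeneral} and in the dimension count. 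Two points deserve a word. First, your identification of the $dq$-preimage of $E_Q\otimes v$ with $\bigl((E\otimes v)\cap\ker d\mu\bigr)+\check\g$ is true but needs a check you have skipped: writing $w\in(E\otimes v)\cap\ker d\mu$ as $w=X_\sigma+w'$ with $w'\in\mathfrak{M}_m^{\perp}$, one must verify that $w'$ again lies in $E\otimes v$; decomposing $\cx^2=\langle v\rangle\oplus\langle v'\rangle$, the $E\otimes v'$-component of $X_\sigma$ is forced into $\mathfrak{M}_m\cap\mathfrak{M}_m^{\perp}$, which vanishes by the nondegeneracy of $g$ on $\mathfrak{M}_m$. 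Without this the Frobenius argument would be applied to a possibly larger distribution. Second, a shorter route to integrability, closer to the twistorial viewpoint of \S\ref{space}, is to observe that $E_Q\otimes v$ is the constant-rank kernel of the descended closed $2$-form $\omega^Q_A$ for $A\in\ssl(2,\cx)$ nilpotent with $\ker A=\langle v\rangle$, so involutivity is the usual one-line consequence of closedness. With these caveats your proof is sound and supplies precisely what the paper delegates to the literature.
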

However, the assumptions in Theorem \ref{nondegen} are not the most general ones, under which one obtains a nondegenerate $\cx$-hyperk\"ahler quotient:
\begin{proposition} Let $\mu:M\to \g^\ast\otimes \ssl(2,\cx)^\ast$ be a  $\cx$-hyperk\"ahler moment map and let $c\in \g^\ast\otimes \ssl(2,\cx)^\ast$ be $G$-invariant. Suppose that $ \mu^{-1}(c)$ is smooth and the action of $G$ on $\mu^{-1}(c)$ is free and proper. Then  $ \mu^{-1}(c)/G$ is a $\cx$-hyperk\"ahler manifold if and only if $\check\g\cap \check\g^\perp$ is $\Mat_{2,2}(\cx)$-invariant along $\mu^{-1}(c)$, where $\check\g$ denotes  the subspace of $T_mM$ generated by fundamental vector fields, and $ \check\g^\perp=\{v;g(X_\rho,v)=0\enskip \forall \rho\in \g\}$.\label{moregeneral}
\end{proposition}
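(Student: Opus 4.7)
Work pointwise at $m\in\mu^{-1}(c)$, writing $V=T_mM$, $H=\check\g_m$, $V_0=T_m\mu^{-1}(c)$, and $N=H\cap H^\perp$. The moment map equation together with non-degeneracy of $g$ identifies $V_0$ with $(\ssl(2,\cx)H)^\perp$, while $G$-invariance of $c$ forces $d\mu_m(X_\rho)=0$, whence $H\subset V_0$ and $g(H,\ssl(2,\cx)H)=0$.

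The first step is to derive, using this vanishing together with the compatibility $g(A\cdot,\cdot)=g(\cdot,A_{\rm adj}\cdot)$ and the splitting $\Mat_{2,2}(\cx)=\cx I\oplus\ssl(2,\cx)$, the identity
\[
g(Ah,Bh')=\tfrac{1}{2}\bigl(\tr A\,\tr B-\tr(AB)\bigr)\,g(h,h')
\]
for $A,B\in\Mat_{2,2}(\cx)$ and $h,h'\in H$. Since the polarisation of $\det$ on $\Mat_{2,2}(\cx)$ is non-degenerate, this will show that the null space of $g$ restricted to $\Mat_{2,2}H$, computed through the natural surjection $\Mat_{2,2}(\cx)\otimes H\twoheadrightarrow\Mat_{2,2}H$, is exactly $\Mat_{2,2}N$, and analogously that $g|_{\ssl(2,\cx)H}$ has null space $\ssl(2,\cx)N$. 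Consequently $V_0\cap V_0^\perp=\ssl(2,\cx)N$, and setting $W:=V_0\cap H^\perp=(\Mat_{2,2}H)^\perp$, the null space of $g|_W$ is $W\cap\Mat_{2,2}H=\Mat_{2,2}N$.

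Next, $W$ is $\Mat_{2,2}$-invariant (since so is its $g$-annihilator $\Mat_{2,2}H$), contains $N$, and meets $H$ in exactly $N$, so inclusion induces an injection $W/N\hookrightarrow V_0/H$. I will then show that $V_0=H+W$, equivalently that this map is surjective, precisely when $H\cap\ssl(2,\cx)H=\ssl(2,\cx)N$; the inclusion $H\cap\ssl(2,\cx)H\subset\ssl(2,\cx)N$ is automatic from $H\subset(\ssl(2,\cx)H)^\perp$, so the condition reduces to $\ssl(2,\cx)N\subset H$, which, since $N\subset H$ and $\Mat_{2,2}N=N+\ssl(2,\cx)N$, is precisely the $\Mat_{2,2}(\cx)$-invariance of $N$.

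Sufficiency then follows at once: under this invariance $\Mat_{2,2}N=N$, so $W/N$ inherits a $\Mat_{2,2}(\cx)$-module structure and a non-degenerate $\Mat_{2,2}$-compatible metric, with closed forms $\omega_A$ inherited from $M$; the isomorphism $W/N\simeq V_0/H$ transports this structure to $V_0/H$, and globalisation via the free proper $G$-action and holomorphicity of $\mu$ yields the $\cx$-hypersymplectic structure on $\mu^{-1}(c)/G$. For the converse I would argue that any $\cx$-hypersymplectic structure on $V_0/H$ natural in the reduction must realise $V_0/H$ through a $\Mat_{2,2}$-invariant subspace of $V_0$ complementing $H$ modulo $N$; since $W$ is the canonical such subspace, non-degeneracy of the induced metric forces the null space $\Mat_{2,2}N$ of $g|_W$ to coincide with $N$. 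The hardest part will be making this canonicity rigorous for the necessity direction.
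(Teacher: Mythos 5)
Your sufficiency argument is essentially sound and takes a genuinely different, more self-contained route than the paper. The identity $g(Ah,Bh')=\tfrac12(\tr A\,\tr B-\tr(AB))\,g(h,h')$ does follow from $g(\ssl(2,\cx)\check\g,\check\g)=0$ together with $g(A\cdot,\cdot)=g(\cdot,A_{\rm adj}\cdot)$ and $A_{\rm adj}=-A$ on $\ssl(2,\cx)$; since the polarisation of $\det$ is nondegenerate on $\Mat_{2,2}(\cx)$, your identification of the radicals of $g$ on $\Mat_{2,2}(\cx)H$ and $\ssl(2,\cx)H$ as $\Mat_{2,2}(\cx)N$ and $\ssl(2,\cx)N$ is correct, and the subspace $W=(\Mat_{2,2}(\cx)H)^\perp$ is indeed a $\Mat_{2,2}(\cx)$-invariant model for $V_0/H$ once $V_0=H+W$. (One quibble: you assert $V_0=H+W$ \emph{precisely when} $H\cap\ssl(2,\cx)H=\ssl(2,\cx)N$, but a perp computation gives $(H+W)^\perp=N+\ssl(2,\cx)H$, so the sharp criterion is $N\subset\ssl(2,\cx)H$; your condition implies this, via $N=-A(AN)\subset\ssl(2,\cx)N$ for any $A\in\ssl(2,\cx)$ with $A^2=-1$, which is all you need for sufficiency, but the reverse implication is not established.) The paper instead quotes Hitchin's formula $\Ker\,\omega_I|_{\mu^{-1}(c)}=\check\g+J(\check\g\cap\check\g^\perp)+K(\check\g\cap\check\g^\perp)$ and reads off both directions at once; your computation effectively reproves the input to that formula, which is a legitimate trade-off.

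The genuine gap is the necessity direction, and you have correctly identified it yourself. "The quotient is $\cx$-hypersymplectic" means that the forms $\omega_A$, $A\in\ssl(2,\cx)$, descend to nondegenerate closed forms on $\mu^{-1}(c)/G$; there is no freedom of "realising $V_0/H$ through some invariant complement", so an appeal to canonicity is not the right shape of argument and cannot be made rigorous as stated. The direct route, which is what the paper does, is to compute the kernel of the restricted form: $\omega_I|_{V_0}$ has kernel $V_0\cap I^{-1}(V_0^\perp)=V_0\cap(H+JH+KH)$, and your radical computations show this equals $\check\g+J N+KN$ (Hitchin's formula). The descended $\omega_I$ is nondegenerate iff this kernel is exactly $\check\g$, i.e. iff $JN+KN\subset\check\g$; running over $I,J,K$ cyclically forces $\ssl(2,\cx)N\subset\check\g$, and since $\ssl(2,\cx)N\subset\ssl(2,\cx)\check\g\subset\check\g^\perp$ automatically, this is exactly $\Mat_{2,2}(\cx)$-invariance of $N$. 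You should replace your last paragraph with this computation; all the linear algebra it needs is already in your first two paragraphs.
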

\begin{proof} Let $I,J,K\in \ssl(2,\cx)$ denote the standard basis of quaternions. Hitchin \cite{H} (see also \cite[p.102]{DS}) shows that along $\mu^{-1}(c)$,
$$ \Ker \omega_I=\check\g+J\check\g\cap \check\g^\perp+K\check\g\cap \check\g^\perp,
$$ and cyclically in $I,J,K$. Thus $\omega_I$ descends to a nondegenerate symplectic form on $ \mu^{-1}(c)/G$ if and only if $J\check\g\cap \check\g^\perp+K\check\g\cap \check\g^\perp\subset \check\g$. Therefore  $ \mu^{-1}(c)/G$ is a $\cx$-hyperk\"ahler manifold if and only if $\check\g\cap \check\g^\perp$ is $\Mat_{2,2}(\cx)$-invariant.\end{proof}
\begin{proposition} Suppose that the conditions of the above proposition are satisfied. Then $\dim\check\g\cap \check\g^\perp=k$ is constant along $\mu^{-1}(c)$ and $$\dim  \mu^{-1}(c)/G=\dim M-4\dim G+2k.$$\end{proposition}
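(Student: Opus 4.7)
The argument rests on two identities. The first is the Hitchin-type identity used in the proof of Proposition~\ref{moregeneral}:
\[\Ker(\omega_I|_{T\mu^{-1}(c)}) = \check\g + J(\check\g\cap\check\g^\perp)+K(\check\g\cap\check\g^\perp).\]
Under the $\Mat_{2,2}(\cx)$-invariance of $N=\check\g\cap\check\g^\perp$, the last two summands collapse into $N\subset\check\g$, so this kernel equals $\check\g$; consequently the descended form $\bar\omega_I$ on $\mu^{-1}(c)/G$ is nondegenerate and
\[\dim(\mu^{-1}(c)/G)=\dim\mu^{-1}(c)-\dim G.\]
The second identity is $g(\check\g,\ssl(2,\cx)\check\g)=0$ at points of $\mu^{-1}(c)$, coming from the moment-map equality $\omega_A(X_\rho,X_\sigma)=\langle c,[\rho,\sigma]\otimes A\rangle$, which vanishes on $\mu^{-1}(c)$ because $c$ is $G$-invariant.

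\textbf{Rank of the moment map.} Via the $g$-isomorphism $T^*M\cong TM$, $(d\mu_m)^T$ becomes $\rho\otimes A\mapsto AX_\rho\in T_mM$, so $\dim\mu^{-1}(c)=\dim M-\dim(\ssl(2,\cx)\cdot\check\g)$. The dimension formula therefore reduces to the identity
\[\dim(\ssl(2,\cx)\cdot\check\g)=3\dim G-2k. \qquad(\ast)\]
Since $N=N_E\otimes\cx^2$ by $\Mat_{2,2}(\cx)$-invariance, $\ssl(2,\cx)\cdot N=N$; picking a complement $W$ of $N$ in $\check\g$ on which $g$ restricts nondegenerately (possible as $g$ descends to a nondegenerate form on $\check\g/N$), one has $\ssl(2,\cx)\cdot\check\g=N+\ssl(2,\cx)\cdot W$. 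The key algebraic fact --- obtained by decomposing $AB=\tfrac12\tr(AB)\cdot\mathbf 1+C$ with $C\in\ssl(2,\cx)$ and using $g(\check\g,\ssl(2,\cx)\check\g)=0$ to kill the $C$-piece --- is
\[g(Aw,Bv)=-\tfrac12\tr(AB)\,g(w,v)\qquad(A,B\in\ssl(2,\cx),\ w,v\in\check\g).\]
Given a basis $\{w_i\}$ of $W$, any relation $\sum_i A_iw_i=0$ yields, upon $g$-pairing with $Bw_j$, the equation $\sum_i\tr(A_iB)g(w_i,w_j)=0$ for all $B,j$; nondegeneracy of the matrix $(g(w_i,w_j))$ together with nondegeneracy of the trace form on $\ssl(2,\cx)$ then forces each $A_i=0$, so $\Phi|_{\ssl(2,\cx)\otimes W}$ is injective and $\dim(\ssl(2,\cx)\cdot W)=3(\dim G-k)$. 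An almost identical pairing argument shows $N\cap\ssl(2,\cx)\cdot W=\{0\}$: for $n=\sum A_iw_i\in N$ the same computation gives $g(n,Bw_j)=-\tfrac12\sum\tr(A_iB)g(w_i,w_j)$, while $g(n,Bw_j)=-g(Bn,w_j)=0$ since $Bn\in\Mat_{2,2}(\cx)\cdot N=N\subset\check\g^\perp$ and $w_j\in\check\g$. Combining these gives $(\ast)$ and hence
\[\dim(\mu^{-1}(c)/G)=\dim M-(3\dim G-2k)-\dim G=\dim M-4\dim G+2k.\]

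\textbf{Constancy of $k$ and main obstacle.} All of $\dim M$, $\dim G$, and $\dim(\mu^{-1}(c)/G)$ are locally constant (the last because $\mu^{-1}(c)/G$ is a smooth $\cx$-hypersymplectic manifold by Proposition~\ref{moregeneral}), so the displayed formula forces $k=\tfrac12(\dim(\mu^{-1}(c)/G)-\dim M+4\dim G)$ to be locally constant along $\mu^{-1}(c)$. The main obstacle is the rank identity $(\ast)$: the pivotal step is the trace identity $g(Aw,Bv)=-\tfrac12\tr(AB)g(w,v)$, in which all three nondegeneracies --- of $g$ on $TM$ (giving the moment-map identity), of $g|_W$ (to invert the pairing matrix), and of the trace form on $\ssl(2,\cx)$ --- are used in an essential way.
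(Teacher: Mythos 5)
Your argument is correct and follows essentially the same route as the paper: both reduce the claim to the rank identity $\operatorname{rank} d\mu=\dim\bigl(\ssl(2,\cx)\cdot\check\g\bigr)=3\dim G-2k$, proved using the isotropy $g(\check\g,\ssl(2,\cx)\check\g)=0$ along $\mu^{-1}(c)$ together with the $\Mat_{2,2}(\cx)$-invariance of $\check\g\cap\check\g^\perp$, and then deduce constancy of $k$ from smoothness of the level set. Your explicit trace-form pairing $g(Aw,Bv)=-\tfrac12\tr(AB)\,g(w,v)$ is just the invariant form of the paper's step of taking scalar products with $IX_\rho, JX_\rho, KX_\rho$, so the two proofs differ only in bookkeeping.
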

\begin{proof}
The smoothness of $\mu^{-1}(c)$ implies that  $(\Ker d\mu)^\perp=I\check\g+ J\check\g+K\check\g$ has constant dimension along  $ \mu^{-1}(c)$, where $\perp$ denotes the orthogonal ``complement" with respect to the form $g$. Suppose that $IX_{\rho_1}+JX_{\rho_2}+KX_{\rho_3}=0$. Taking the scalar products (for the form $g$) with $IX_\rho,JX_\rho,KX_\rho$ shows that  $X_{\rho_1},X_{\rho_2},X_{\rho_3}\in \check\g\cap\check\g^\perp$. Therefore $\dim (\Ker d\mu)^\perp=3\dim G-2\dim\check\g\cap \check\g^\perp$ and it follows that $\dim\check\g\cap \check\g^\perp$ must be constant along $ \mu^{-1}(c)$.\end{proof}
We shall use the above results in the following situation.
\begin{corollary} Let $\mu=\mu_H\oplus\mu_L:M\to \g^\ast\otimes \ssl(2,\cx)^\ast$ be a  $\cx$-hyperk\"ahler moment map for a semidirect product $G\simeq H\ltimes L$ and let $c=(c_H,c_L)\in \g^\ast\otimes \ssl(2,\cx)^\ast$ be $G$-invariant. Suppose that the following conditions are satisfied:
\begin{enumerate}
\item[(i)] the action of $G$ on $\mu^{-1}(c)$ is free and proper;
\item[(ii)] $\mu_L^{-1}(c_L)$  is smooth and the action of $L$ on $\mu_L^{-1}(c_L)$ is free and proper;
\item[(iii)] $\check\fL$ is $\Mat_{2,2}(\cx)$-invariant along $\mu^{-1}(c_L)$;
\item[(iv)] $\check\fL\subset \check\g\cap \check\g^\perp $ along $\mu^{-1}(c)$, with equality holding generically.
\end{enumerate}
Then:
\begin{itemize}
\item[a)] $ \mu^{-1}(c)/G$ is smooth if and only if $\dim I\check\fH+J\check\fH+K\check\fH=3\dim H$ 
along $\mu^{-1}(c)$.
\item[b)]
 $ \mu^{-1}(c)/G$ is a $\cx$-hyperk\"ahler manifold  if and only if 
$\check\g\cap \check\g^\perp=\check\fL$ along $\mu^{-1}(c)$.
\end{itemize}
In both a) and b), the dimension of  $ \mu^{-1}(c)/G$ is $\dim M-4\dim H-2\dim L$.
\label{smoothQ}\end{corollary}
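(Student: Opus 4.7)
My plan is to carry out the $G$-reduction as a two-stage quotient, first by the normal subgroup $L$, yielding an intermediate $\cx$-hypersymplectic manifold, then by $H=G/L$. For part (b), I would apply Proposition \ref{moregeneral} directly to the $G$-action: if $\check\g\cap\check\g^\perp = \check\fL$ along $\mu^{-1}(c)$, condition (iii) makes this intersection $\Mat_{2,2}(\cx)$-invariant and Proposition \ref{moregeneral} then yields the $\cx$-hypersymplectic structure on $\mu^{-1}(c)/G$. Conversely, if the quotient is $\cx$-hypersymplectic, the dimension proposition immediately following Proposition \ref{moregeneral} gives that $k=\dim(\check\g\cap\check\g^\perp)$ is constant along $\mu^{-1}(c)$; the generic equality in (iv) forces $k=\dim L$, and the inclusion $\check\fL\subset\check\g\cap\check\g^\perp$ from (iv) then upgrades to equality everywhere along $\mu^{-1}(c)$.

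For part (a) I would first reduce by $L$. Hypothesis (iv) gives $\check\fL\subset\check\g^\perp\subset\check\fL^\perp$, so $\check\fL$ is $g$-isotropic and $\check\fL\cap\check\fL^\perp=\check\fL$; by (iii) this is $\Mat_{2,2}(\cx)$-invariant. Together with (ii), Proposition \ref{moregeneral} then produces a $\cx$-hypersymplectic manifold $M':=\mu_L^{-1}(c_L)/L$ of dimension $\dim M-2\dim L$, on which the residual $H$-action is well-defined (since $L$ is normal in $G$) and $\mu_H$ descends to a moment map $\mu_H'\colon M'\to\fH^*\otimes\ssl(2,\cx)^*$ with $(\mu_H')^{-1}(c_H)=\mu^{-1}(c)/L$. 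Hypothesis (i) ensures that $H$ acts freely and properly on $(\mu_H')^{-1}(c_H)$, so Theorem \ref{nondegen} applies to this second stage: the quotient $\mu^{-1}(c)/G$ is smooth (equivalently, $\cx$-hypersymplectic) precisely when $d\mu_H'$ has rank $3\dim H$ at each point.

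To translate this rank condition back to $T_mM$, I use that $G$-invariance of $c$ forces $\omega_A(X_\rho,X_\sigma)=0$ for all $\rho,\sigma\in\g$ and $A\in\ssl(2,\cx)$, so in particular $I\check\fH+J\check\fH+K\check\fH\subset\check\g^\perp\subset\check\fL^\perp$. By (iii) the $\Mat_{2,2}(\cx)$-action descends to $T_{[m]}M'=\check\fL^\perp/\check\fL$, and the image $I\check\fH'+J\check\fH'+K\check\fH'$ of $I\check\fH+J\check\fH+K\check\fH$ under the projection has dimension $\dim(I\check\fH+J\check\fH+K\check\fH)-\dim\bigl((I\check\fH+J\check\fH+K\check\fH)\cap\check\fL\bigr)$. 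Since this quantity is bounded above by $3\dim H$, demanding equality with $3\dim H$ simultaneously forces the linear independence of the three summands (the condition stated in the corollary) and the vanishing of their intersection with $\check\fL$; the dimension count $\dim M'-4\dim H=\dim M-4\dim H-2\dim L$ then follows immediately. The main obstacle I anticipate is the careful bookkeeping in the two-stage descent, especially verifying that the $\Mat_{2,2}(\cx)$-action and the subspaces $I\check\fH,J\check\fH,K\check\fH$ pass cleanly to $T_{[m]}M'$; this relies on the interplay between (iii) (making $\check\fL$ itself $\Mat_{2,2}(\cx)$-invariant) and the $G$-invariance of $c$ (which places $I\check\fH+J\check\fH+K\check\fH$ inside $\check\fL^\perp$).
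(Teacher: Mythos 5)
Your two-stage strategy (reduce by the normal subgroup $L$ first, then by $H$ on the intermediate quotient $M_L=\mu_L^{-1}(c_L)/L$) is exactly the route the paper takes, and your translation of the rank condition from $T_mM$ to $T_{[m]}M_L$ in part a) is a more explicit version of the paper's one-line assertion that $\dim(I\check\fH+J\check\fH+K\check\fH)=3\dim H$ is equivalent to $\bar\mu_H$ being a submersion along $\bar\mu_H^{-1}(c_H)$. The one place where you genuinely deviate is the ``if'' direction of part b), and there you have a gap: you apply Proposition \ref{moregeneral} directly to the $G$-action on $M$, but that proposition takes smoothness of $\mu^{-1}(c)$ as a \emph{hypothesis}, and smoothness of $\mu^{-1}(c)$ is not among the assumptions of Corollary \ref{smoothQ} (condition (i) only gives freeness and properness). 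The paper avoids this by applying Theorem \ref{nondegen} to the reduced triple $(M_L,H,\bar\mu_H)$: the condition $\check\g\cap\check\g^\perp=\check\fL$ says precisely that $g$ is nondegenerate on $\check\fH$ modulo $\check\fL$, i.e.\ on the span of the fundamental vector fields of $H$ in $M_L$, and Theorem \ref{nondegen} then delivers smoothness of $\bar\mu_H^{-1}(c_H)$ and the induced $\cx$-hypersymplectic structure simultaneously. You can repair your version within your own framework by noting that $\check\g\cap\check\g^\perp=\check\fL$ forces the rank condition of part a) (if $IX_{\rho_1}+JX_{\rho_2}+KX_{\rho_3}=0$ with $\rho_i\in\fH$, the pairing argument from the dimension proposition puts each $X_{\rho_i}$ in $\check\g\cap\check\g^\perp=\check\fL$, hence $X_{\rho_i}=0$ by freeness), so smoothness is available before you invoke Proposition \ref{moregeneral} --- but as written the step is unjustified. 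Your converse in b) (constancy of $\dim(\check\g\cap\check\g^\perp)$ plus generic equality in (iv)) is a legitimate alternative to the paper's converse, which instead deduces smoothness of $\bar\mu_H^{-1}(c_H)$ from a) and then cites Proposition \ref{moregeneral}; both arguments work and are of comparable length.
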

\begin{proof} Conditions (ii) and (iii) together with Proposition \ref{moregeneral} imply that $M_L=\mu_L^{-1}(c_L)/L$ is a $\cx$-hyperk\"ahler manifold (of dimension $\dim M-2\dim L$). Since $L$ is normal in $G$, we obtain a $\cx$-hyperk\"ahler action of $H$ on $M_L$ and a $\cx$-hyperk\"ahler moment map $\bar\mu_H:M_L\to \fH^\ast\otimes \ssl(2,\cx)^\ast$. The $\cx$-hyperk\"ahler quotient of $M_L$ by $H$ is isomorphic to the $\cx$-hyperk\"ahler quotient of $M$ by $G$. Statement a)  follows, since $\dim I\check\fH+J\check\fH+K\check\fH=3\dim H$ is equivalent to $\bar\mu_H$ being a submersion at points of $\bar\mu_H^{-1}(c_H)$. On the other hand, the condition in b) means that
the triple $(M_L,H,\bar\mu_H)$ satisfies conditions of Theorem \ref{nondegen}, and is therefore sufficient. Conversely, if $ \mu^{-1}(c)/G$ is a $\cx$-hyperk\"ahler manifold, then, owing to a), $\dim I\check\fH+J\check\fH+K\check\fH=3\dim H$, which means that $\bar\mu_H^{-1}(c_H)$ is smooth.  
The neccessity of $\check\g\cap \check\g^\perp=\check\fL$ follows now from Proposition \ref{moregeneral}.
\end{proof}

\section{Genus zero space curves\label{rational}}

We return to the situation discussed in \S\ref{space} and consider in detail the case of genus $0$ space curves. We denote by $R_d$ an open subset of the Hilbert scheme 
$\Hilb_{d,0}$ of subschemes in $\oP^3$ with Hilbert polynomial $h(n)=dn+1$, consisting of those $C\in \Hilb_{d,0}$ which satisfy
\begin{itemize}
\item[(i)] $C$ is contained in $\oP^3\backslash\{[z_0,z_1,0,0]\}$;
\item[(ii)] the projection $\pi$ of $C$ onto $\{[0,0,z_2,z_3]\}$ is flat;
\item[(iii)] $C$  is nonplanar.
\end{itemize}
Such a $C$ is automatically Cohen-Macaulay and connected.
We denote by $R_d^{i}$, $i=0,1,2$, an open subset of  $R_d$ consisting of those $C\in \Hilb_{d,0}$ which satisfy, in addition, $h^1(\sN_{C/\oP^3}(-i))=0$. We have $R_d^2\subset R_d^1\subset R_d^0$ and $R_d^0$ is precisely the smooth locus of $R_d$. Moreover, the results of Ghione and Sacchiero \cite[Cor. 1.4]{GS} imply that any immersed rational curve in $R_d$ belongs to $R_d^1$. 
\par
For a $C\in R_d$ we have $\pi_\ast\sO_C\simeq \sO_{\oP^1}\oplus \sO_{\oP^1}(-1)^{\oplus (d-1)}$ (since $\deg \pi_\ast\sO_C=-(d+g-1)$, $g=0$, the degrees of the summands are nonpositive, and $h^0(\sO_C)=1$) and thus curves  in $R_d$ correspond to sections of $p:Z_d\to \oP^1$ which arise as projections of sections of $K(E)\to\oP^1$ with $E=\sO_{\oP^1}\oplus \sO_{\oP^1}(-1)^{\oplus (d-1)}$ (and the map associating a section to a curve is a biholomorphism between $R_d$ and the corresponding open subset of the Kodaira moduli space of sections of $p$, cf. Proposition \ref{bihomo}). Equivalently, owing to Proposition \ref{Z(E)}, curves in $R_d$ can be obtained as sections of $Z(E^\prime)$ with $E^\prime\simeq  \sO_{\oP^1}(-1)^{\oplus (d-1)}$.  
\par
If $C\in R_d^2$, then the normal bundle of the corresponding section of $Z_d$ is isomorphic to $\sO_{\oP^1}(1)^{\oplus 2d}$ and, hence, $R_d^2$ comes equipped with a  $\cx$-hyperk\"ahler structure. Moreover, given a real structure $\sigma$ on $|\sO_{\oP^1}(-1)\oplus\sO_{\oP^1}(-1)|$, covering a real structure on $\oP^1$, we obtain a pseudo-hyperk\"ahler or hypersymplectic structure on $(R_d^2)^\sigma$ (depending on whether $\sigma$ is fixed-point free or not).
\begin{example} Let $C$ be a twisted rational curve of degree $3$ not meeting the $\oP^1$. Its ideal is generated by the minors of the $3\times 2$ matrix
$$\begin{pmatrix} x & 0\\ y& x\\ 0 & y\end{pmatrix} -C(t),$$
where the entries of $C$ are constant or linear in $t$.
Thus
$$ x^2= (c_{11}+c_{22})x-c_{12}y-C_3,\enskip xy=c_{32}x+c_{11}y -C_2,\enskip y^2=(c_{21}+c_{32})y-c_{31}x-C_1,
$$
where $C_i$ denotes the determinant of a $2\times 2$ matrix obtained by deleting the $i$-th row from $C$. Thus the matrices $A$ and $B$ are
\begin{equation*} A(t)=\begin{pmatrix} 0 & -C_3 & -C_2\\ 1 & c_{11}+c_{22} & c_{32}\\ 0 & -c_{12} & c_{11}\end{pmatrix},\quad
B(t)=\begin{pmatrix} 0 & -C_2 & -C_1 \\ 0 & c_{32} & -c_{31}\\ 1 & c_{11} & c_{21}+c_{32}
\end{pmatrix}.
\end{equation*}
The metric is given as the coefficient of $t$ in power series expansion of 
\begin{multline*}\tr dA\wedge dB= d(c_{11}+c_{22})\wedge dc_{32}+dc_{32}\wedge dc_{11}+dc_{12}\wedge dc_{31}+da_{11}\wedge d(c_{21}+c_{32})=\\
 \hspace{1.975cm} = d(c_{11}+c_{22})\wedge d(c_{32}+c_{21})-dc_{22}\wedge dc_{21}+dc_{12}\wedge dc_{31}.\hfill\end{multline*}
 The antiholomorphic involution $\sigma$, covering the antipodal map on $\oP^1$,
 acts on linear polynomials $c_{ij}(t)$ as
$$\begin{pmatrix} c_{11}(t) & c_{12}(t)\\ c_{21}(t) & c_{22}(t)\\ c_{31}(t) & c_{32}(t)\end{pmatrix}\mapsto t\ol{\begin{pmatrix} c_{32}(-1/\bar t) & c_{31}(-1/\bar t)\\ -c_{22}(-1/\bar t) & c_{21}(-1/\bar t)\\ 
-c_{12}(-1/\bar t) & -c_{11}(-1/\bar t)\end{pmatrix}}.$$
Restricting the above formula for $\tr dA\wedge dB$ to $\sigma$-invariant sections shows that the hyperk\"ahler metric on the space of real twisted cubics is flat with signature $(8,4)$. 
\end{example}
We shall now describe $R^2_d$, $d\geq 4$, as a  $\cx$-hyperk\"ahler quotient of a flat $\cx$-hyperk\"ahler manifold.  Let $M=\Mat_{d-1,d-1}(\cx)\otimes \cx^4$ and write its elements  as $(X_0,X_1,Y_0,Y_1)$. $M$ is a $\cx$-hyperk\"ahler manifold with the action of $\Mat_{2,2}(\cx)$ on $TM$ given by the left multiplication on $ \Mat_{2,2}(\cx)\simeq \cx^4$, i.e.:
\begin{equation} \begin{pmatrix} X_0 & Y_0\\ X_1 & Y_1\end{pmatrix}\mapsto \begin{pmatrix} aI & bI\\ cI & dI\end{pmatrix} \begin{pmatrix} X_0 & Y_0\\ X_1 & Y_1\end{pmatrix}.\label{Mat}\end{equation}
The symmetric holomorphic $(2,0)$-tensor $g$ is
\begin{equation} g=\tr\bigl( dX_1dY_0-dX_0 dY_1).\label{g}\end{equation}
The twistor space $Z$ of $M$ is the total space of $\Mat_{d-1,d-1}(\cx)\otimes\sO_{\oP^1}(1)^{\oplus 2}$, and an element $(X_0,X_1,Y_0,Y_1)$ of $M$ is identified with the section  $$(X(t),Y(t))=(X_0+tX_1, Y_0+tY_1)$$ of $Z\to \oP^1$. The twisted fibrewise symplectic form on $Z$ is $\tr d(X_0+tX_1)\wedge d(Y_0+tY_1)$.
\par
We define a $\cx$-hyperk\"ahler submanifold $M_d$ of $M$ to consist of $(X_0,X_1,Y_0,Y_1)$ such that:
\begin{itemize}
\item[(i)] $\forall_{t\in\oP^1}$ $(X(t),Y(t),\langle e_1,e_2\rangle)$ is stable.
\item[(ii)] $\forall_{t\in\oP^1}$ $X_{12}(t)=Y_{11}(t),\enskip X_{22}(t)=Y_{21}(t)$.
\end{itemize}
Next we define the relevant group $\hat G_d$ acting on $M_d$. It consists of invertible $d\times d$ matrices of the form
\begin{equation} g(t)=\begin{pmatrix} 1 & 0 & 0 & u_1(t) &\dots &u_{d-3}(t)\\ 0&1&0& \ast &\dots &\ast \\ 0&0&1& \ast  &\dots & \ast \\ 0& 0 &0&\ast &\dots &\ast \\ \vdots &\vdots & \vdots & \vdots& &\vdots \\ 0& 0 &0&\ast&\dots &\ast  \end{pmatrix},\label{Gd}\end{equation}
where asterisks denote complex numbers, and $u_i(t)$, $i=1,\dots, d-3$, are linear polynomials. The group structure is given by matrix multiplication.
Observe that if we  fix $t$, we obtain a group $\hat G_d(t)$ isomorphic to the group $\hat G_0$ of \S\ref{notlinear}. We have $\hat G_d\simeq G_0\ltimes L$, where $G_0$ is obtained by deleting the first row and the first column of \eqref{Gd} and $L\simeq \cx^{2(d-3)}$ is the additive group of $d-3$ linear polynomials. If we write an element of $L$ as $u(t)=(0,0,u_1(t),\dots,u_{d-3}(t))$, then the action of $\hat G_d$ on pairs of linear matrix polynomials $(X(t),Y(t))$ can be written as (cf. \eqref{action}):
\begin{equation} (g_0,u(t)).(X(t),Y(t))=\bigl( g_0X(t)g_0^{-1} -e_1u(t)g_0^{-1}, g_0Y(t)g_0^{-1} -e_2u(t)g_0^{-1}\bigr).\label{action2}
\end{equation}
This action preserves the $\cx$-hyperk\"ahler structure of $M_d$ and it lifts to an action on the twistor space of $M_d$: an element $g(t)$ acts as $g(t_0)$ on the fibre over $t_0$.
We easily compute the $\cx$-hyperk\"ahler moment map $\mu:M_d\to \hat\g_d^\ast\otimes \ssl(2,\cx)\simeq (\g_0^\ast\oplus \fL^\ast) \otimes \ssl(2,\cx)$:
\begin{equation}
\mu_{ij}=\begin{cases}\frac{1}{2} \bigl( \pi([X_0,Y_1]+[X_1,Y_0]),(X_1+tX_0)e_2-(Y_1+tY_0)e_1\bigr) & \text{if $ij=11$},\\
\bigl( \pi([X_0,Y_0]),X_0e_2-Y_0e_1\bigr) &  \text{if $ij=21$},\\
 \bigl( \pi([X_1,Y_1]),t(X_1e_2-Y_1e_1)\bigr)&  \text{if $ij=12$},\end{cases} \label{moment}\end{equation}
where $\pi:\Mat_{d-1,d-1}\to \g_0^\ast$ is the projection onto the last $d-3$ rows.
 We conclude:
\begin{lemma} An element $(X_0,X_1,Y_0,Y_1)$ of $ M_d$ belongs to $\mu^{-1}(0)$ if and only if, for every $t\in \oP^1$, $X(t)e_2=Y(t)e_1$ and  $[X(t),Y(t)]$ has nonzero entries only in the first two rows.\hfill $\Box$\label{mu(0)}\end{lemma}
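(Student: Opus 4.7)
The plan is to unpack the moment map $\mu$ according to the decomposition $\hat\g_d\simeq G_0\ltimes L$ on the Lie algebra level, namely $\hat\g_d=\g_0\oplus \fL$, and to split each of the three $\ssl(2,\cx)^\ast$-components of $\mu$ in \eqref{moment} into its $\g_0^\ast$ piece (the first entry of each pair) and its $\fL^\ast$ piece (the second entry). I expect the $\g_0^\ast$ pieces to encode the vanishing of $[X(t),Y(t)]$ outside of the first two rows, while the $\fL^\ast$ pieces encode the equation $X(t)e_2=Y(t)e_1$.

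For the $\g_0^\ast$ part, I would expand
$$[X(t),Y(t)]=[X_0,Y_0]+t\bigl([X_0,Y_1]+[X_1,Y_0]\bigr)+t^2[X_1,Y_1]$$
and note that the three $\g_0^\ast$ summands in $\mu_{21}$, $\mu_{11}$ and $\mu_{12}$ are, up to harmless scalars, exactly the images under $\pi$ of the three coefficients of this polynomial. Their simultaneous vanishing is therefore equivalent to $\pi([X(t),Y(t)])=0$ for every $t\in\oP^1$, and since $\pi$ is the projection onto the last $d-3$ rows, this is precisely the statement that $[X(t),Y(t)]$ has nonzero entries only in the first two rows.

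For the $\fL^\ast$ part, the vector-valued polynomial $X(t)e_2-Y(t)e_1=(X_0e_2-Y_0e_1)+t(X_1e_2-Y_1e_1)$ has degree at most one in $t$; its constant term appears in $\mu_{21}$, its leading coefficient (multiplied by $t$) appears in $\mu_{12}$, and the full polynomial (with coefficient $\tfrac12$) appears in $\mu_{11}$. Because $\fL$ consists of polynomials $u(t)=(0,0,u_1(t),\dots,u_{d-3}(t))$ with vanishing first two entries, the pairing with $u(t)$ reads off only the last $d-3$ coordinates of $X(t)e_2-Y(t)e_1$, while its first two coordinates already vanish on $M_d$ by the defining conditions $X_{12}(t)=Y_{11}(t)$ and $X_{22}(t)=Y_{21}(t)$. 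Thus simultaneous vanishing of the three $\fL^\ast$-components is equivalent to $X(t)e_2=Y(t)e_1$ for all $t$. Conversely, if both displayed conditions hold, every piece of \eqref{moment} vanishes by inspection.

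The argument is essentially bookkeeping, so no real obstacle is anticipated; the only point requiring attention is that the $\fL^\ast$-pairing sees only the last $d-3$ coordinates of a column vector, so that the first two coordinates of the equation $X(t)e_2=Y(t)e_1$ must be supplied by the definition of $M_d$ rather than by the moment map.
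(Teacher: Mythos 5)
Your proof is correct and is exactly the direct inspection of \eqref{moment} that the paper intends: the lemma is stated there with no written proof, being regarded as immediate from that formula, and your bookkeeping (the $\g_0^\ast$-pieces are the three $t$-coefficients of $\pi([X(t),Y(t)])$, the $\fL^\ast$-pieces see only the last $d-3$ coordinates of $X(t)e_2-Y(t)e_1$, with the first two coordinates supplied by condition (ii) defining $M_d$) is precisely the right way to fill it in. The only slip is that the $\fL^\ast$-piece of $\mu_{11}$ is $(X_1+tX_0)e_2-(Y_1+tY_0)e_1$, the coefficient-reversed polynomial rather than $X(t)e_2-Y(t)e_1$ itself, but this is harmless since the vanishing of the $\mu_{21}$- and $\mu_{12}$-pieces already forces both coefficients to vanish.
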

\begin{remark} We should like to remark that the moment map equations for $\hat G_d$ are formally similar to the {\em complex ADHM equations} considered in \cite{FJ}. The difference is that the $\bigl((d-1)\times 2, 2\times (d-1)\bigr)$-part $(i,j)$ (cf. \S \ref{notlinear}) of the ADHM-datum is no longer linear in $t$; $i$ is now constant, while $j$ is quadratic.
\end{remark}
\begin{lemma} The action of $\hat G_d$ on $\mu^{-1}(0)$ is free and proper.\end{lemma}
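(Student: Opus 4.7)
The plan is to reduce both assertions to the fiberwise statement already established in Theorem \ref{nonlin}, by exploiting the fact that an element of $\hat G_d$ is determined by its values at any two distinct points of $\oP^1$. By Lemma \ref{mu(0)}, a point $(X_0,X_1,Y_0,Y_1)\in \mu^{-1}(0)$ is precisely a pair of linear matrix polynomials $(X(t),Y(t))$ such that, for every $t\in \oP^1$, $(X(t),Y(t))$ lies in the zero set $\hat\mu^{-1}(0)\subset V_0^{\rm s}$ of the $\hat G_0$-moment map from \S\ref{notlinear}. A direct comparison of \eqref{action2} with \eqref{action} shows that $g\in\hat G_d$ acts on $(X_0,X_1,Y_0,Y_1)$ fiberwise: at each $t$ the induced transformation of $(X(t),Y(t))$ is exactly the action of $g(t)\in \hat G_0$.

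Freeness then drops out immediately. If $g\in\hat G_d$ stabilizes $(X_0,X_1,Y_0,Y_1)\in \mu^{-1}(0)$, then $g(t)$ stabilizes $(X(t),Y(t))\in \hat\mu^{-1}(0)$ for every $t\in\oP^1$, so by Theorem \ref{nonlin} we have $g(t)=I$ for every $t$. The shape \eqref{Gd} then forces each entry of $g$ --- whether constant in $t$ or a linear polynomial in $t$ --- to be the corresponding identity entry, hence $g$ is the identity in $\hat G_d$.

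The bulk of the argument is properness. Given sequences $x_n\to x$ and $g_n x_n\to y$ in $\mu^{-1}(0)$, I would fix two distinct points $t_0,t_1\in \oP^1$. Evaluation at $t_i$ yields $x_n(t_i)\to x(t_i)$ and $g_n(t_i)\cdot x_n(t_i)\to y(t_i)$ in $\hat\mu^{-1}(0)$; applying properness of the $\hat G_0$-action from Theorem \ref{nonlin} successively at $t_0$ and then $t_1$ produces, after twice passing to subsequences, convergence $g_n(t_0)\to h_0$ and $g_n(t_1)\to h_1$ in $\hat G_0$. Because every entry of a matrix in \eqref{Gd} is either constant in $t$ or linear in $t$, and a linear polynomial is determined by its values at two distinct points, all parameters defining $g_n$ converge; this produces a limit $g$, as a matrix polynomial, satisfying the closed structural conditions of \eqref{Gd}.

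The main obstacle, and the only delicate step, is to check that $g$ lies in the \emph{open} locus $\hat G_d$, i.e.\ that $g(t)$ is invertible for all $t$. This reduces to a single block-computation: since the first three columns in \eqref{Gd} are $(e_1,e_2,e_3)$, expansion of the determinant yields $\det g(t)=\det D$, where $D$ is the constant bottom-right $(d-3)\times(d-3)$ block of $g$. In the limit, $D$ equals the corresponding block of $h_0\in \hat G_0$, and is therefore invertible. Hence $g\in \hat G_d$ and properness follows.
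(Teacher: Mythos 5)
Your proof is correct and follows essentially the same route as the paper's: freeness by evaluating at a single $t_0$ and invoking Theorem \ref{nonlin}, and properness by evaluating at two points of $\oP^1$ (the paper uses $0$ and $\infty$), extracting convergent subsequences fiberwise via Theorem \ref{nonlin}, and reconstructing a limit in $\hat G_d$ from the fact that the entries are at most linear in $t$. Your explicit check that the limit is invertible (via $\det g(t)=\det D$ with $D$ the constant bottom-right block) is a slightly more detailed version of what the paper leaves implicit.
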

\begin{proof}  If $g\in \hat G_d $ fixes a point  $(X_0,X_1,Y_0,Y_1)$ of $ M_d$, then $g(t_0)$ fixes $(X(t_0),Y(t_0))$, for every $t_0$. Choosing a $t_0$ such that $g(t_0)\neq 1$ contradicts Theorem \ref{nonlin}. Therefore   $\hat G_d$ acts freely. The moment map for the twisted symplectic form $\tr dX(t)\wedge dY(t)$ is $\mu(t)=\mu_{21}+2\mu_{11}t+\mu_{12}t^2$. Setting $X_0=\mu(0)^{-1}(0),\; X_1=\mu(1)^{-1}(0),\; X_2=\mu(\infty)^{-1}(0)$, we have $\mu^{-1}(0)=X_0\cap X_1\cap X_2$. For every $t_0$, the group $\hat G_{d}(t_0)$ (obtained by evaluating \eqref{Gd} at $t_0$) acts properly on $\mu(t_0)^{-1}(0)$, owing to Theorem  \ref{nonlin}. It follows that, if $m_i\in \mu^{-1}(0)$, $g_i=(g_i^0,u_i(t))\in \hat G_d$, $i\in \oN$, and $m_i\to m$, $g_i m_i\to m^\prime$, then both $(g_i^0, u_i(0))$ and $(g_i^0,u_i(\infty))$ have convergent subsequences. Therefore $(g_i)$ has a convergent subsequence and $\hat G_d$ acts properly on $\mu^{-1}(0)$. 
\end{proof}
This lemma implies that $\mu^{-1}(0)/\hat G_d$ is a Hausdorff topological space, and that the smooth part of $\mu^{-1}(0)$ is a principal $\hat G_d$-bundle over a complex manifold. Observe now that the fundamental vector field corresponding to $u_0+tu_1\in \fL$ is $$X_u=(-e_1u_0,-e_1u_1,-e_2u_0,-e_2u_1)$$
and, consequently, the subspace of these vector fields is $\Mat_{2,2}(\cx)$-invariant, owing to \eqref{Mat}. Observe also that $g(X_u,X_\rho)=0$ at points of $\mu^{-1}(0)$, for every $\rho\in \hat\g_d$. 
Corollary \ref{smoothQ} implies:
\begin{proposition} Let   $m=(X_0,X_1,Y_0,Y_1)\in \mu^{-1}(0)$ and let $\bar m$ denote its image in $\mu^{-1}(0)/\hat G_d$.
\begin{itemize}\item[(i)] $\bar m$ is a smooth point of $\mu^{-1}(0)/\hat G_d$ if and only if the dimension of the subspace  of $T_mM_d$ spanned by $ AX_\rho$, $A\in\ssl(2,\cx),\;\rho \in \g_0$, equals $3\dim \g_0$;
\item[(ii)] $\bar m$ is a smooth point of $\mu^{-1}(0)/\hat G_d$ and the $\cx$-hyperk\"ahler structure is nondegenerate at $\bar m$ if and only if the quadratic form \eqref{g} is nondegenerate on the subspace generated by $X_\rho$, $\rho \in \g_0$.\hfill $\Box$\end{itemize}
\end{proposition}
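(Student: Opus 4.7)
The plan is to apply Corollary \ref{smoothQ} directly, with $H=G_0$ and $L$ the normal abelian subgroup of $\hat G_d\simeq G_0\ltimes L$ consisting of the matrices \eqref{Gd} with $g_0=I$. Once its four hypotheses are in place, part a) will give statement (i) of the proposition and part b) will give statement (ii), after the translations $I\check\g_0+J\check\g_0+K\check\g_0=\bigl\{AX_\rho:A\in\ssl(2,\cx),\,\rho\in\g_0\bigr\}$ and $\check\g\cap\check\g^\perp=\check\fL\iff g|_{\check\g_0}\text{ nondegenerate}$. The latter equivalence uses the decomposition $\check\g=\check\g_0+\check\fL$ together with the inclusion $\check\fL\subset\check\g^\perp$ already noted in the text: writing an element of $\check\g\cap\check\g^\perp$ as $v_0+v_L$, the $\fL$-part lies in $\check\g^\perp$ automatically, so $v_0\in\check\g_0$ must be $g$-orthogonal to all of $\check\g_0$.

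Hypothesis (i) -- freeness and properness of $\hat G_d$ on $\mu^{-1}(0)$ -- is the content of the preceding lemma. For hypothesis (ii), I read off from \eqref{moment} that the $\fL$-component of $\mu$ is an affine-linear function of $(X_0,X_1,Y_0,Y_1)$, so $\mu_L^{-1}(0)\cap M_d$ is smooth, and the $L$-action reduces to the translation $(X_i,Y_i)\mapsto (X_i-e_1u_i,Y_i-e_2u_i)$, which is manifestly free and proper. Hypothesis (iii) is a short block-matrix computation: writing $X_u$ as the $2\times 2$ block matrix with rows $(-e_1u_0,-e_2u_0)$ and $(-e_1u_1,-e_2u_1)$ and applying \eqref{Mat}, the action sends $X_u$ to $X_{u'}$ with $u'(t)=(au_0+bu_1)+t(cu_0+du_1)\in\fL$.

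The substantive step is hypothesis (iv). The inclusion $\check\fL\subset\check\g\cap\check\g^\perp$ on $\mu^{-1}(0)$ is observed in the text just before the proposition. The required generic equality is, by the rewriting above, equivalent to nondegeneracy of $g|_{\check\g_0}$, which is a Zariski-open condition on $\mu^{-1}(0)$; thus it suffices to exhibit one point where it holds. Such a point comes from the non-empty open subset $R_d^2$: a general smooth rational curve of degree $d$ in $\oP^3$ satisfying the flatness assumption has nearly balanced normal bundle $\sN\simeq\sO(a)\oplus\sO(b)$ with $a+b=4d-2$ and $a,b\geq 1$, hence lies in $R_d^2$. By \cite{sigma} the image $\bar m$ is a smooth point of the $\cx$-hypersymplectic manifold $R_d^2$, and by Proposition \ref{moregeneral} this nondegeneracy forces $\check\g\cap\check\g^\perp=\check\fL$ at $m$.

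The main obstacle I expect is precisely this generic-equality half of hypothesis (iv): the other three hypotheses are essentially formal consequences of the construction, but the nondegeneracy of $g|_{\check\g_0}$ at some point requires an external input -- either the existence of a rational curve with balanced normal bundle in $\oP^3$ (provable via Ghione--Sacchiero \cite{GS}) or the established pseudo-hyperk\"ahler structure of \cite{sigma}. Once that point is secured, parts a) and b) of Corollary \ref{smoothQ} deliver (i) and (ii) by the translations above.
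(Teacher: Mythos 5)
This is essentially the paper's own argument: the paper likewise deduces the proposition from Corollary \ref{smoothQ} applied to the decomposition $\hat G_d\simeq G_0\ltimes L$, after exactly your two observations (the explicit formula for $X_u$ showing $\check\fL$ is $\Mat_{2,2}(\cx)$-invariant, and $g(X_u,X_\rho)=0$ along $\mu^{-1}(0)$), with your translations of conclusions a) and b) into statements (i) and (ii) being the intended ones. The only place you go beyond the paper is in justifying the ``equality holding generically'' clause of hypothesis (iv) --- which the paper passes over in silence --- and there your appeal to \cite{sigma} and Proposition \ref{moregeneral} risks circularity with Theorem \ref{main}; fortunately that clause feeds only into the dimension formula of the corollary and not into the pointwise characterisations a) and b), so it is not actually needed for the proposition as stated.
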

Let us denote by $M_d^0$ (resp. $M_d^2$) the subset of $M_d$ where the condition (i) (resp. (ii)) is satisfied. Thus $\mu^{-1}(0)\cap M_d^0/\hat G_d$ is a complex manifold and $\mu^{-1}(0)\cap M_d^2/\hat G_d$ is a $\cx$-hyperk\"ahler manifold. We can now state and prove our main result:
\begin{theorem} There is a natural bijection between $R_d$ and $\mu^{-1}(0)/\hat G_d$. This bijection is a biholomorphism between $R_d^0$ and 
 $\bigl(\mu^{-1}(0)\cap M_d^0\bigr)/\hat G_d$ and a $\cx$-hyperk\"ahler isomorphism between $R_d^2$ and 
 $\bigl(\mu^{-1}(0)\cap M_d^2\bigr)/\hat G_d$.\label{main}\end{theorem}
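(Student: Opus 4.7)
The plan is to assemble the bijection from the correspondences already established, then verify holomorphicity and the refined (hyper)geometric statements through the short exact sequences of \S\ref{space} and the twistor description of the quotient.

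\textbf{Step 1: set-theoretic bijection.} Given $C\in R_d$, rationality forces $\pi_\ast\sO_C\simeq \sO\oplus \sO(-1)^{\oplus(d-1)}$, and Proposition \ref{correspondence} (with $r=3$) attaches to $C$ a pair of commuting $d\times d$ matrix polynomials $(A(t),B(t))$ of the prescribed column/degree form. Applying Theorem \ref{nonlin} fibrewise—i.e.\ the same $\hat G_0$-reduction used in \S\ref{notlinear}—produces a pair $(X(t),Y(t))$ of linear $(d-1)\times(d-1)$ matrix polynomials, equivalently a point $(X_0,X_1,Y_0,Y_1)\in M$. The constraints of Proposition \ref{correspondence} and the column-fixing in \eqref{hatXY} translate precisely into the defining conditions (i)--(ii) of $M_d$, and by Lemma \ref{mu(0)} the commutation condition $[\hat X(t),\hat Y(t)]=0$ for every $t$ is the same as $\mu(X_0,X_1,Y_0,Y_1)=0$. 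Conversely, any element of $\mu^{-1}(0)\cap M_d$ defines, via $\hat X(t),\hat Y(t)$, a global section of $\sK^0(E')$ with $E'=\sO(-1)^{\oplus(d-1)}$; by Propositions \ref{Z(E)} and \ref{bihom} this is the same as a curve in $R_d$. The residual choice of frame $\pi_\ast\sO_C\simeq \sO\oplus E'$ is precisely parametrised by the fibrewise $\hat G_d$-action \eqref{action2}, so we get a canonical bijection $R_d\leftrightarrow \mu^{-1}(0)/\hat G_d$.

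\textbf{Step 2: holomorphy and biholomorphism on $R_d^0$.} Both maps are given by universal constructions (the relative Hilbert scheme and the fibrewise quotient) and are therefore holomorphic where defined. The infinitesimal analysis uses the sequences \eqref{KTN} and \eqref{K(2)}. Specialising to $E=\sO\oplus \sO(-1)^{\oplus(d-1)}$, one reads off $H^0(\oP^1,\sEnd(E)\otimes \sO(1)^{\oplus 2})=T_{(X,Y)}M_d$ (after imposing (ii)), $H^0(\sL)=\Lie \hat G_d$, and $H^0(N)=T_C R_d$ via Lemma \ref{pi*}. The long exact sequence identifies the tangent space to $\bigl(\mu^{-1}(0)\cap M_d^0\bigr)/\hat G_d$ at $\bar m$ with $H^0(N)$, and the obstruction with a subquotient of $H^1(\sL)$ mapping into $H^1(N)$. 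The condition defining $M_d^0$ (that $\{AX_\rho:A\in \ssl(2,\cx),\rho\in \g_0\}$ has dimension $3\dim\g_0$) pulls back via this long exact sequence precisely to the vanishing $h^1(\sN_{C/\oP^3}(-1))=0$ cutting out $R_d^0$. Thus the bijection restricts to an isomorphism of complex manifolds on the smooth loci.

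\textbf{Step 3: the $\cx$-hypersymplectic isomorphism on $R_d^2$.} The quickest route is through twistor spaces. The quotient construction on $M$ assembles fibrewise into a relative quotient of the total space of $\sEnd(E')\otimes \sO(1)^{\oplus 2}$ by the fibrewise action of $\hat G_d$, and Step 1 identifies this with $Z(E')\simeq Z_d^\prime$ (Proposition \ref{Z(E)}); the $\sO(2)$-valued fibrewise symplectic form $\tr dX(t)\wedge dY(t)$ descends to the canonical fibrewise symplectic form on $Z_d^\prime$. Sections of this twistor space with normal bundle $\sO(1)^{\oplus 2d}$ form, by the HKLR correspondence, the $\cx$-hypersymplectic manifold $R_d^2$; on the quotient side they correspond, via Step 2 and the long exact sequence, precisely to those $\bar m$ at which the quadratic form \eqref{g} is nondegenerate on the span of the fundamental fields of $\g_0$, i.e.\ to points of $(\mu^{-1}(0)\cap M_d^2)/\hat G_d$. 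Since the two manifolds share a twistor space together with its fibrewise symplectic form and (in the real setting) antiholomorphic involution, the induced $\cx$-hypersymplectic structures coincide; the quotient instance of this is exactly Corollary \ref{smoothQ} applied to $\hat G_d\simeq G_0\ltimes L$, whose hypotheses (freeness/properness, $\Mat_{2,2}(\cx)$-invariance of $\check\fL$, $\check\fL\subset \check\g\cap \check\g^\perp$) were verified above and in Lemma \ref{mu(0)}.

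\textbf{Main obstacle.} The non-obvious step is the cohomological identification in Step 2: matching the vanishing $h^1(\sN_{C/\oP^3}(-i))=0$ defining $R_d^i$ with the nondegeneracy/smoothness criteria for the quotient. This requires chasing \eqref{KTN} and \eqref{K(2)} carefully with $E=\sO\oplus E'$, keeping track of the splitting $\sEnd(E)/\sZ\simeq (E')^\ast\oplus\sEnd(E')$ and how the $L$-factor of $\hat G_d$ accounts exactly for the $(E')^\ast$-summand of $\sL$, so that the $H^1$-term landing in $H^1(N(-1))$ (respectively $H^1(N(-2))$) is under control; once this bookkeeping is in place the three claimed isomorphisms follow.
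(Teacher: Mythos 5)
Your argument follows essentially the same route as the paper's: the set-theoretic bijection is assembled from Propositions \ref{bihom} and \ref{Z(E)} together with Lemma \ref{mu(0)}, and the identification of the complex and $\cx$-hypersymplectic structures is obtained by recognising $Z(E^\prime)$ as the common twistor space, with the refined loci matched to the smooth locus of the Hilbert scheme of sections and to the locus of sections with normal bundle $\sO(1)^{\oplus 2d}$ respectively. Your Step 2 proposes a more explicit cohomological chase through \eqref{KTN} and \eqref{K(2)} than the paper carries out, which is a reasonable way to substantiate the paper's terse assertion that the two smooth loci coincide; the only genuine slip is the index there: $R_d^0$ is cut out by $h^1(\sN_{C/\oP^3})=0$ (via Lemma \ref{pi*}, the smoothness of the Hilbert scheme of sections, i.e.\ $h^1(N)=0$), not by $h^1(\sN_{C/\oP^3}(-1))=0$, which defines $R_d^1$; with that correction your matching of $M_d^0$ with $R_d^0$ is the intended one and the argument goes through.
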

 \begin{proof} 
 Propositions \ref{bihom} and \ref{Z(E)} show that there is a natural isomorphism between $R_d$ and sections of $Z(E^\prime)$ with $E^\prime \simeq \sO_{\oP^1}(-1)^{\oplus (d-1)}$. From the construction, $Z(E^\prime)$ is the twistor space of $\mu^{-1}(0)/\hat G_d$. Sections of $Z(E^\prime)$ are projections of sections of $K(E^\prime)$, which in turn correspond to points of $\mu^{-1}(0)$ (as shown in  Lemma \ref{mu(0)}). The smooth locus of the (component of) Hilbert scheme of sections of $Z(E^\prime)$ is then the smooth locus of $\mu^{-1}(0)/\hat G_d$, i.e.\ $\mu^{-1}(0)\cap M_d^0/\hat G_d$, and the locus of sections, the normal bundle of which splits as $\sO_{\oP^1}(1)^{\oplus 2d}$, is $\bigl(\mu^{-1}(0)\cap M_d^2\bigr)/\hat G_d$. The result follows.
 \end{proof}
 \begin{remark}  The bijection in the first statement of this theorem is an isomorphism if we give $\mu^{-1}(0)/\hat G_d$ the scheme structure of the Hilbert scheme of sections of $Z(E^\prime)$. A natural question is whether this scheme structure can be obtained via nonreductive invariant theory \cite{BDHK}.\end{remark}
 \begin{remark} One can compute the restriction of \eqref{g} to the subspace generated by fundamental vector fields, and thus characterise $M_d^2$ as consisting of points $m$ such that certain linear operator $L_m:\g_0\to \g_0^\ast$ is invertible. 
 \end{remark}
 
 \subsection{Real structures}
 Up to the action of $PGL(4,\cx)$, $\oP^3$ has two antilinear involutions:
 $$ \sigma[z_0,z_1,z_2,z_3]= [-\bar z_1,\bar z_0,-\bar z_3,\bar z_2],$$
 $$ \sigma^\prime [z_0,z_1,z_2,z_3]= [\bar z_1,\bar z_0,\bar z_3,\bar z_2].$$
 Both of them preserve $\oP^1=\{[z_0,z_1,0,0]\}$, and the space of invariant lines in $\oP^3\backslash \oP^1$ is diffeomorphic to $\oR^4$. In the case of $\sigma$, the $\cx$-hyperk\"ahler structure on the space of lines restricts to a flat hyperk\"ahler structure on $\oR^4$, while in the case of $\sigma^\prime$, it restricts to a flat hypersymplectic structure \cite{DS}. The involutions $\sigma,\sigma^\prime$ act on curves in $\oP^3\backslash \oP^1$ and we obtain a pseudo-hyperk\"ahler (resp.  hypersymplectic) structure on manifolds of $\sigma$-invariant (resp.  $\sigma^\prime$-invariant) cohomologically stable curves of fixed genus and degree. We want to describe these manifolds in the case of genus $0$ curves, i.e. $\bigl(R_d^2)^\sigma$ and  $\bigl(R_d^2)^{\sigma^\prime}$. First of all we have:
 \begin{proposition} $R_d^\sigma$ is empty if $d$ is even.  
 \end{proposition}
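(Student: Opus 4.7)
The plan is to put a quaternionic structure on $H^\ast(C,\sO_C(1))$ for any $C\in R_d^\sigma$, so that the Hilbert-polynomial value $\chi(\sO_C(1))=d+1$ is forced to be even, and hence $d$ to be odd.

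The first step is to lift $\sigma$ to the antilinear map $J:\cx^4\to\cx^4$ given by $J(z_0,z_1,z_2,z_3)=(-\bar z_1,\bar z_0,-\bar z_3,\bar z_2)$ and to observe that $J^2=-\mathrm{id}_{\cx^4}$; this is exactly what makes $\sigma$ fixed-point-free on $\oP^3$, and amounts to a quaternionic structure on $\cx^4$. Consequently the natural antiholomorphic lift $\tilde\sigma$ of $\sigma$ to the hyperplane bundle $\sO_{\oP^3}(1)$ satisfies $\tilde\sigma^2=-\mathrm{id}$ on fibres. Equivalently, on the space of linear forms $H^0(\oP^3,\sO(1))$ the antilinear involution $\ell\mapsto\overline{\ell\circ\sigma}$ has square $-1$, as one checks directly from $\tilde\sigma^2(\ell)(z)=\ell(\sigma^2 z)=\ell(-z)=-\ell(z)$.

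Next, for $C\in R_d^\sigma$, restricting $\tilde\sigma$ to $\sO_C(1)=\sO_{\oP^3}(1)\vert_C$ yields an antiholomorphic lift of $\sigma\vert_C$ whose fibrewise square is still $-1$. This induces an antilinear endomorphism $J_i$ of each cohomology group $H^i(C,\sO_C(1))$ with $J_i^2=-\mathrm{id}$. Any complex vector space carrying such a structure is a left module over the quaternions (let $j$ act as $J_i$ and $i$ as the complex structure), and therefore has even complex dimension.

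Putting it all together, $h^0(C,\sO_C(1))$ and $h^1(C,\sO_C(1))$ are both even, hence $\chi(\sO_C(1))=d+1$ is even, forcing $d$ to be odd and contradicting the assumption. The only delicate point is that the sign of $\tilde\sigma^2$ should be an invariant of the lift; this holds because rescaling $\tilde\sigma\mapsto c\tilde\sigma$ replaces $\tilde\sigma^2$ by $|c|^2\tilde\sigma^2$, so its sign in $\cx^\ast/\oR_{>0}$ is well-defined. Notably the argument uses no reducedness, irreducibility, or cohomological-stability assumption on $C$, so it covers all members of $R_d$ uniformly.
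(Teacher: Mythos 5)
Your proof is correct, but it follows a genuinely different route from the paper's. The paper pushes $\sO_C$ down to $\oP^1$: since $\pi_\ast\sO_C\simeq\sO\oplus\sO(-1)^{\oplus(d-1)}$ and $\sigma$ covers the antipodal map on the base $\oP^1$, one gets an induced antiholomorphic involution on the determinant line bundle $\Lambda^{d-1}W^\ast\simeq\sO(d-1)$ covering the fixed-point-free antipodal map, and the parity constraint on the degree of such a line bundle forces $d-1$ to be even. You instead work directly on $\oP^3$: the linear antiholomorphic lift $J$ of $\sigma$ satisfies $J^2=-1$ (equivalently, $\sigma$ is fixed-point free), so the induced lift to $\sO_{\oP^3}(1)$ squares to $-\mathrm{id}$, the cohomology groups $H^i(C,\sO_C(1))$ become quaternionic vector spaces and hence have even complex dimension, and therefore $\chi(\sO_C(1))=d+1$ is even. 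Both arguments are sound; the comparison is instructive. Your version is more intrinsic and more general: it uses neither the projection $\pi$, nor flatness, nor the splitting type of $\pi_\ast\sO_C$, only that $C$ is a $\sigma$-invariant closed subscheme with Hilbert polynomial $dn+1$, so it actually shows that the entire $\sigma$-fixed locus of $\Hilb_{dn+1}(\oP^3)$ is empty for $d$ even (and adapts immediately to genus $g$, giving the parity constraint $d+1-g\equiv 0\bmod 2$, the same as the paper's $\deg\pi_\ast\sO_C=-(d+g-1)$ condition). The paper's version, by contrast, stays within the bundle-theoretic framework used throughout (the splitting of $\pi_\ast\sO_C$ and involutions covering the antipodal map), which is the language needed for the subsequent construction of $\hat G_d^\sigma$ and Theorem \ref{sigmamain}. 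Your closing remarks on the well-definedness of the sign of $\tilde\sigma^2$ and on the absence of any reducedness hypothesis are both correct and worth keeping.
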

 \begin{proof} For $C$ in $R_d$ and the corresponding $\pi:C\to \oP^1$ we have, as observed before, $\pi_\ast \sO_C\simeq \sO_{\oP^1}\oplus\sO_{\oP^1}(-1)^{\oplus (d-1)}$.
 The involution $\sigma$ induces an antilinear involution on $W=\sO_{\oP^1}(-1)^{\oplus (d-1)}$, which covers the antipodal map on $\oP^1$. This, in turn, induces an antilinear involution on $\Lambda^{d-1}W^\ast\simeq \sO_{\oP^1}(d-1)$, which covers the antipodal map. Since this involution has no fixed points, the number of zeros of any section of $\Lambda^{d-1}W^\ast$ must be even.
 \end{proof}
 If $d$ is odd, then there is an induced involution $\sigma$ on $E^\prime\simeq\sO_{\oP^1}(-1)^{\oplus (d-1)}$. Modulo conjugation we can write $\sigma$ in the standard trivialisation over $t\neq \infty$ as 
 $$\sigma(t;f_1,\dots,f_{d-1})=(-1/\bar t; -\bar t\bar f_2, \bar t\bar f_1, -\bar t\bar f_4, \bar t\bar f_3,\dots, -\bar t\bar f_{d-1}, \bar t\bar f_{d-2}).
$$
This, in turn, yields an antiholomorphic involution on $\End(E^\prime)$: $M\mapsto \sigma M\sigma$ and, finally, on $M_d$:
\begin{equation*} \bigl(X(t),Y(t)\bigr)\mapsto \bigl(-\sigma Y(t)\sigma/\bar t, \sigma X(t)\sigma/\bar t\bigr).
\end{equation*}
It follows that:
\begin{equation} (X_0,X_1,Y_0,Y_1)\mapsto\bigl(-\tau \bar Y_1\tau, \tau \bar Y_0\tau, \tau \bar X_1\tau,-\tau \bar X_0\tau\bigr),
\label{sigma}
\end{equation}
where 
$$\tau_{ij}=\begin{cases} -1 &\text{if $j=i+1$ and $i$ is odd},\\ 1 &\text{if $j=i-1$ and $i$ is even},\\ 0 &\text{otherwise}.\end{cases}$$
Consequently:
$$ M_d^\sigma=\{ (X_0,X_1,Y_0,Y_1)\in M_d;\; Y_0=\tau \bar X_1\tau,\enskip Y_1=-\tau \bar X_0\tau\}.$$
An easy computation shows that the quadratic form \eqref{g} restricted to $M_d^\sigma$ is a pseudo-Riemannian metric of signature $\bigl(2(d-1)^2+2(d-3), 2(d-1)^2-2(d-1)\bigr)$. The subgroup $\hat G_d^\sigma$ commuting with $\sigma$ consists of elements $(h,u)\in G_0\ltimes L$, where
$h=\tau\bar h\tau^{-1}$ and $u_{2i-1}(-1/\bar t)=-\ol{u_{2i}(t)}/\bar t$, $i=1,\dots,(d-3)/2$. We conclude from Theorem \ref{main}:
\begin{theorem} Let $d$ be odd. The pseudo-hyperk\"ahler manifold $\bigl(R^2_d\bigr)^\sigma$ of cohomologically stable connected $\sigma$-invariant Cohen-Macaulay curves of arithmetic genus $0$ in $\oP^3\backslash \oP^1$ is isomorphic to the hyperk\"ahler quotient of $\bigl(M_d^2\bigr)^\sigma$ by the group $\hat G_d^\sigma$.\hfill $\Box$ \label{sigmamain}\end{theorem}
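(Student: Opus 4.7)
The plan is to leverage the $\cx$-hypersymplectic isomorphism of Theorem \ref{main} and check that it intertwines the two $\sigma$-actions on each side. In other words, I would verify that the natural bijection between $R_d^2$ and $\bigl(\mu^{-1}(0)\cap M_d^2\bigr)/\hat G_d$ takes $\sigma$-fixed curves on the left to the image of $\sigma$-fixed matrix data on the right, after which the statement reduces to the general fact that the quotient of the fixed set by the fixed subgroup is the fixed set of the quotient, plus the standard observation that the real part of a $\cx$-hypersymplectic quotient is a pseudo-hyperk\"ahler quotient in the sense of \cite{HKLR}.

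First I would trace the real structure through the correspondence of \S\ref{revisit}. The involution $\sigma$ on $\oP^3$ preserves $\oP^1 = \{[z_0,z_1,0,0]\}$ and acts antipodally there, hence lifts to an antiholomorphic involution on the twistor space $Z_d$ (equivalently on $Z(E^\prime)$) covering the antipodal map. For a $\sigma$-invariant $C\in R_d$, the induced antilinear involution on $\pi_\ast\sO_C\simeq \sO\oplus \sO(-1)^{\oplus (d-1)}$ preserves the splitting (since $\sO$ is the unique trivial summand), and thus induces an antilinear involution on $E^\prime\simeq \sO(-1)^{\oplus (d-1)}$ covering the antipodal map. Since $d-1$ is even, such an involution is conjugate to the standard quaternionic one spelled out before \eqref{sigma}, and its action on $\bigl(\sEnd(E^\prime)\otimes \sO(1)\bigr)^{\oplus 2}$, when written out in the global trivialisation giving $M_d$, is precisely the formula \eqref{sigma}. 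Consequently the bijection of Theorem \ref{main} is $\sigma$-equivariant, and $\hat G_d^\sigma$ is the subgroup of $\hat G_d$ which intertwines $\sigma$ on the matrix side.

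Next I would observe that since $\hat G_d$ acts freely and properly on $\mu^{-1}(0)\cap M_d^2$ (Theorem \ref{main}) and commutes with the relevant complex conjugation in the obvious sense, every $\sigma$-invariant orbit contains $\sigma$-fixed points, and any two such points differ by an element of $\hat G_d^\sigma$. Hence $$\bigl((\mu^{-1}(0)\cap M_d^2)/\hat G_d\bigr)^\sigma = \bigl((\mu^{-1}(0)\cap M_d^2)^\sigma\bigr)/\hat G_d^\sigma.$$ Finally, Lemma \ref{mu(0)} combined with \eqref{sigma} shows that $(\mu^{-1}(0))^\sigma$ is cut out inside $M_d^\sigma$ by the three real moment maps of the $\hat G_d^\sigma$-action for the triple of real symplectic forms obtained by restricting $\omega_I,\omega_J,\omega_K$ (this is the general mechanism explained in \S\ref{interlude} specialised to the real slice). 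Thus the right-hand side is manifestly the pseudo-hyperk\"ahler quotient of $(M_d^2)^\sigma$ by $\hat G_d^\sigma$, which matches the pseudo-hyperk\"ahler structure on $(R_d^2)^\sigma$ coming from the Kodaira moduli theory of \cite{sigma}.

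The main technical point I expect to dwell on is the second paragraph: identifying the induced real structure on $E^\prime$ with the standard quaternionic one up to the $\hat G_d$-action, and then checking that the resulting conjugation on $\bigl(\sEnd(E^\prime)\otimes \sO(1)\bigr)^{\oplus 2}$ really reproduces \eqref{sigma} (and in particular that the involution preserves the normalisations used to define $M_d$, namely $X_{12}=Y_{11}$, $X_{22}=Y_{21}$, and the cyclicity of $\langle e_1,e_2\rangle$). Everything else is essentially a formal consequence of Theorem \ref{main} and general hyperk\"ahler quotient machinery.
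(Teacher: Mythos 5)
Your proposal is correct and follows essentially the same route as the paper: the paper's entire ``proof'' consists of the preceding derivation of the induced involution on $E'\simeq\sO(-1)^{\oplus(d-1)}$, the explicit formula \eqref{sigma} for $\sigma$ on $M_d$, the identification of $M_d^\sigma$ and $\hat G_d^\sigma$, and the words ``we conclude from Theorem \ref{main}.'' You merely make explicit the two steps the paper leaves implicit, namely the $\sigma$-equivariance of the bijection of Theorem \ref{main} and the identification of the fixed locus of the quotient with the quotient of the fixed loci.
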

\begin{remark} It follows from the main result of \cite{BZ}  that the signature of the pseudo-hyperk\"ahler metric on $\bigl(R^2_d\bigr)^\sigma$ is $(2d+2,2d-2)$.
\end{remark}
\par
There is an analogous description of the hypersymplectic manifold  $\bigl(R^2_d\bigr)^{\sigma^\prime}$. Essentially, we just have to remove all the minus signs in the above formulae (in particular, $d$ can be arbitrary). 
We leave the details to the interested reader.

\end{document}